\date{}
\author{
Michael Krivelevich \thanks{School of Mathematical Sciences, Raymond
and Beverly Sackler Faculty of Exact Sciences, Tel Aviv University,
Tel Aviv 69978, Israel. Email address: krivelev@post.tau.ac.il.
Research supported in part by USA-Israel BSF Grant 2010115 and by
grant 912/12 from the Israel Science Foundation. } \and Choongbum Lee
\thanks{Department of Mathematics, MIT, Cambridge, MA, 02142.
Email: cb\_lee@math.mit.edu. Research supported in part by a
Samsung Scholarship.} \and Benny Sudakov
\thanks{Department of Mathematics, UCLA, Los Angeles, CA 90095.
Email: bsudakov@math.ucla.edu. Research supported in part by 
NSF grant DMS-1101185, by AFOSR MURI grant FA9550-10-1-0569 and by a 
USA-Israel BSF grant. } }
\numberwithin{equation}{section}
\numberwithin{figure}{section}
\theoremstyle{plain}
\newtheorem{thm}{Theorem}[section]
  \theoremstyle{plain}
  \newtheorem{lem}[thm]{Lemma}
  \theoremstyle{remark}
  \newtheorem{claim}[thm]{Claim}
\newcommand{\BBP}{\mathbb{P}}
\newcommand{\BBE}{\mathbb{E}}
\begin{document}

\title{\vspace{-1cm}Long paths and cycles in random subgraphs of graphs with large minimum degree}

\maketitle

\begin{abstract}
For a given finite graph $G$ of minimum degree at least $k$, let
$G_{p}$ be a random subgraph of $G$ obtained by taking each edge
independently with probability $p$. We prove that (i) if $p \ge
\omega/k$ for a function $\omega=\omega(k)$ that tends to infinity
as $k$ does, then $G_p$ asymptotically almost surely contains a
cycle (and thus a path) of length at least $(1-o(1))k$, and (ii) if
$p \ge (1+o(1))\ln k/k$, then $G_p$ asymptotically almost surely
contains a path of length at least $k$. Our theorems extend
classical results on paths and cycles in the binomial random graph,
obtained by taking $G$ to be the complete graph on $k+1$ vertices.
\end{abstract}

\section{Introduction}

Paths and cycles are two of the most simple yet important structures
in graph theory, and being such, the problem of finding conditions
that imply the existence of paths and cycles of various lengths has
attracted a lot of attention in the field for the past 60 years. For
example, Dirac \cite{Dirac} proved that for $k \ge 2$, every graph
of minimum degree at least $k$ contains a path of length $k$ and a
cycle of length at least $k+1$, and that every graph on $n$ vertices
of minimum degree at least $\frac{n}{2}$ is Hamiltonian, i.e.,
contains a cycle of length $n$. By considering a complete graph on
$n=k+1$ vertices, and two edge-disjoint complete graphs of the same
size sharing a single vertex, one can see that these results are
tight. One reason that finding such conditions is of considerable
interest is because it is often the case that conditions implying
the existence of paths and cycles can be generalized to other
substructures such as trees, and general subgraphs. For example,
P\'osa \cite{Posa} proved that expansion implies the existence of
long paths, and later Friedman and Pippenger \cite{FrPi} generalized
P\'osa's approach to trees.

Given a graph $G$ and a real $p\in[0,1]$, let $G_{p}$ be the probability
space of subgraphs of $G$ obtained by taking each edge of $G$ independently
with probability $p$. We sometimes use the notation $(G)_p$ to avoid
ambiguity. For a given graph property $\mathcal{P}$,
and sequences of graphs $\{G_{i}\}_{i=1}^{\infty}$ and of probabilities
$\{p_{i}\}_{i=1}^{\infty}$, we say that $(G_{i})_{p_{i}}\in\mathcal{P}$
\emph{asymptotically almost surely}, or \emph{a.a.s.} for brevity,
if the probability that $(G_{i})_{p_{i}}\in\mathcal{P}$ tends to
$1$ as $i$ goes to infinity. In this paper, when $G$ and $p$ are
parameterized by some parameter, we abuse notation and consider $G$
and $p$ as sequences obtained by taking the parameter to tend to
infinity, and will say that $G_{p}$ has $\mathcal{P}$ asymptotically
almost surely if the sequence does.

The most studied case of the above model of random graphs is when
$G$ is a complete graph $K_{n}$ on $n$ vertices, where $(K_{n})_{p}$
is known as the \emph{binomial random graph} $G(n,p)$. This model
has been first introduced in 1959 \cite{Gilbert} and has been
intensively studied thereafter. In a seminal paper, Ajtai, Koml\'os,
and Szemer\'edi \cite{AjKoSz}, confirming a conjecture of Erd\H{o}s,
proved in particular that for $p=\frac{c}{n}$ and $c>1$, $G(n,p)$
a.a.s. contains a path of length at least $(1-f(c))n$, where $f(c)$
is a function tending to zero as $c$ goes to infinity; this was also
proved independently by Fernandez de la Vega \cite{Vega}. Analogous
problem for cycles was studied by Bollob\'as, Fenner, and Frieze
\cite{BoFeFr}. Frieze \cite{Frieze} later determined the asymptotics
of the number of vertices not covered by a longest path and cycle in
$G(n,p)$. Also, for Hamiltonian paths and cycles, i.e., paths and
cycles which pass through every vertex of the graph, improving on
results of P\'osa \cite{Posa} and Korshunov \cite{Korshunov},
Bollob\'as \cite{Bollobas} and Koml\'os and Szemer\'edi \cite{KoSz}
independently proved that for every fixed positive $\varepsilon$ and
$p\ge\frac{(1+\varepsilon)\log n}{n}$, the random graph $G(n,p)$ is
a.a.s. Hamiltonian. See the book of Bollob\'as \cite{Bollobas2} for
a comprehensive overview of results on paths and cycles in random
graphs.

In this paper we study generalizations of the above mentioned
results. Our goal is to extend classical results on random graphs to
a more general class of graphs. More precisely, we would like to
replace the host graph, taken to be the complete graph in the
classical setting, by a graph of large minimum degree, and to find
a.a.s. long paths and cycles in random subgraphs of large minimum
degree graphs. Throughout the paper, we will only consider finite
graphs.

Our first two theorems study paths.  
\begin{thm}
\label{thm:linearpath}Let $G$ be a finite graph with minimum degree at least $k$,
and let $p=\frac{c}{k}$ for some positive $c$ satisfying $c = o(k)$ 
($c$ is not necessarily fixed). Then a.a.s. $G_{p}$ contains a path of length $(1-2c^{-1/2})k$.
\end{thm}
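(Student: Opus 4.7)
The plan is to run depth-first search on $G_p$ with edges exposed adaptively and to derive a contradiction from the assumption that $G_p$ has no path with more than $(1-2c^{-1/2})k$ vertices. Maintain the standard partition $V(G)=S\cup T\cup U$ into completed, stack (always a path in $G_p$), and unvisited vertices. When the top vertex $v$ of $T$ has an untested $G$-neighbor in $U$, query that edge: it lies in $G_p$ independently with probability $p=c/k$, and on a positive response we push the endpoint, on a negative response we try the next neighbor, and when $v$ has no untested $U$-neighbor left we pop it into $S$ (restarting from a fresh vertex of $U$ if $T$ empties). Because $U$ shrinks monotonically, every $G$-edge from a popped vertex $v$ to the $U$ at its moment of popping has been queried and answered negatively, and distinct such pairs $(v,w)$ give distinct negative queries.

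Set $t=(1-2c^{-1/2})k$ and assume for contradiction that $|T|\le t$ throughout the DFS. Since $\delta(G)\ge k$ forces $|V(G)|\ge k+1$, there is a first moment $\tau$ when $|S\cup T|=k+1$, at which $|S_\tau|\ge k+1-t=2c^{-1/2}k+1$. Let $v_i$ be the $i$-th popped vertex; just before its pop, $|S_{v_i}|=i-1$ and $|T_{v_i}|\le t$, so $v_i$ has at least $k-(i-1)-(|T_{v_i}|-1)\ge k-t-i+2$ neighbors still in $U_{v_i}$ (in $G$), each supplying a distinct negative query. Summing over $i=1,\dots,k-t+1$ gives at least
\begin{equation*}
\sum_{i=1}^{k-t+1}(k-t+2-i)=\binom{k-t+2}{2}=(2+o(1))\,c^{-1}k^{2}
\end{equation*}
negative queries by time $\tau$, whereas the positive queries at time $\tau$ correspond to edges of the DFS forest on the $k+1$ vertices of $S_\tau\cup T_\tau$ and so number at most $k$.

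To conclude, let $Y_1,Y_2,\dots$ be i.i.d.\ Bernoulli$(p)$ random variables giving the query outcomes in the order issued, and let $Q_\tau$ and $P_\tau=\sum_{i=1}^{Q_\tau}Y_i$ denote the total and positive query counts by time $\tau$. The previous paragraph shows that on the contradiction event one has the deterministic lower bound $Q_\tau\ge M:=(2-o(1))c^{-1}k^2$, whence $P_\tau\ge\sum_{i=1}^{M}Y_i\sim\mathrm{Bin}(M,p)$ with mean $(2-o(1))k$. A standard Chernoff estimate then yields $\BBP[\mathrm{Bin}(M,p)\le k]\le\exp(-\Omega(k))=o(1)$, contradicting the structural cap $P_\tau\le k$. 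Therefore a.a.s.\ $G_p$ does contain a path of length $(1-2c^{-1/2})k$.

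The main obstacle is handling the adaptivity in the concentration step: $\tau$ is a stopping time determined by the DFS's own responses, so one has to invoke stochastic domination---viewing queries as consecutive draws from an i.i.d.\ Bernoulli stream---to cleanly separate the deterministic bound $Q_\tau\ge M$ from the probabilistic tail estimate on $P_\tau$. The remaining bookkeeping (integer parts in the binomial coefficient, and the fact that $c=o(k)$ guarantees $2c^{-1/2}k\to\infty$ so that the Chernoff error is genuinely $o(1)$) is routine.
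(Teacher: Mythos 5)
Your argument is correct, and at its core it is the same DFS proof as the paper's (given as Theorem~\ref{thm:linearpath_strong}(i)): run depth-first search on $G_p$, use the minimum-degree hypothesis to force many negatively answered queries from the popped (completed) vertices, and contradict a Chernoff estimate on the positive fraction of the i.i.d.\ query stream. The bookkeeping differs. The paper fixes the query budget $k^{2}/c$ in advance and applies Chernoff once to guarantee $(1-c^{-1/2})k$ positives in that window, after which the contradiction is entirely deterministic: at least $c^{-1/2}k$ completed vertices each force at least $c^{-1/2}k$ negative answers towards the unvisited set, overshooting the budget. You instead introduce the stopping time $\tau$ (first time $|S\cup T|=k+1$), sum negative contributions pop by pop (a triangular count giving $\sim 2c^{-1}k^{2}$ negatives, about twice what the paper extracts), and resolve the adaptivity concern through the containment $\{Q_\tau\ge M \text{ and } P_\tau\le k\}\subseteq\{\sum_{i=1}^{M}Y_i\le k\}$ together with the fact that $\sum_{i=1}^{M}Y_i\sim\mathrm{Bin}(M,p)$; that containment is exactly the right observation and requires no heavier machinery than you use. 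Incidentally, your sharper triangular count would support replacing $2c^{-1/2}$ by roughly $\sqrt{2}\,c^{-1/2}$ in the error term, though this is not needed for the stated bound.
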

We can also a.a.s.~find a path of length exactly $k$, given that $p$
is sufficiently large.
\begin{thm}
\label{thm:hamiltonpath}
Let $\varepsilon$ be a fixed positive real. 
For a finite graph $G$ of minimum degree at least $k$ and a real
$p\ge\frac{(1+\varepsilon)\log k}{k}$, $G_{p}$ a.a.s. contains a path of length
$k$.
\end{thm}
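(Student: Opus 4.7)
We combine Theorem~\ref{thm:linearpath} with the P\'osa rotation--extension technique via a two-round exposure. Set $p_1 = \omega(k)/k$ with $\omega(k) \to \infty$ slowly (e.g.\ $\omega(k) = \log\log k$), and define $p_2$ by $1-p = (1-p_1)(1-p_2)$, so that $p_2 \ge (1+\varepsilon/2)\log k / k$ for all sufficiently large $k$. Then $G_p$ is distributed as $G_{p_1}\cup G_{p_2}$ with $G_{p_1}$ and $G_{p_2}$ independent. By Theorem~\ref{thm:linearpath} applied to $G_{p_1}$ with $c=\omega(k)$, a.a.s.\ $G_{p_1}$ contains a path of length at least $(1-2\omega(k)^{-1/2})k = (1-o(1))k$, and hence so does $G_{p_1}\cup G_{p_2}$.

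Let $P$ be a longest path in $G_{p_1}\cup G_{p_2}$; it has length at least $(1-o(1))k$ a.a.s., and it suffices to show that if $|E(P)|<k$ then $P$ can a.a.s.\ be extended, contradicting maximality. Performing P\'osa rotations at one endpoint of $P$ using only edges of $G_{p_1}$ produces a set $A\subseteq V(P)$ of alternative endpoints of length-$|E(P)|$ paths on $V(P)$. By P\'osa's lemma, $|A|\ge \alpha k$ for some absolute constant $\alpha>0$, provided every $S\subseteq V(P)$ with $|S|\le \alpha k$ satisfies $|N_{G_{p_1}}(S)\setminus S|>2|S|$. Since every $v\in V(G)$ has $G$-degree at least $k$ and $p_1 = \omega(k)/k$, the expected number of $G_{p_1}$-neighbors of $S$ outside $S$ is $\Omega(|S|\omega(k))$, and a standard Chernoff-type argument yields this expansion for each fixed $S$.

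Once $|A|\ge \alpha k$, each $w\in A$ has at least $k-|V(P)|+1\ge 1$ $G$-neighbors in $V(G)\setminus V(P)$ by the minimum-degree hypothesis, giving at least $\alpha k$ distinct potential ``extension edges'' from $A$ into $V(G)\setminus V(P)$. Conditional on $G_{p_1}$ (which determines $P$, $A$, and these candidate edges), each is independently present in $G_{p_2}$ with probability $p_2$, so the chance that none appears is at most $(1-p_2)^{\alpha k}\le e^{-\alpha k p_2}\le k^{-\alpha(1+\varepsilon/2)}=o(1)$. Any present extension edge produces a path strictly longer than $P$ in $G_{p_1}\cup G_{p_2}$, contradicting maximality; iterating this extension at most $k-|E(P)|=o(k)$ times brings the length up to $k$.

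The main obstacle is establishing the required P\'osa $2$-expansion for $G_{p_1}$ uniformly over the (possibly unbounded) location of $V(P)$ in the host graph $G$, since a direct union bound over subsets of $V(G)$ can fail when $|V(G)|$ is super-polynomial in $k$. We resolve this by coupling the exposure of $G_{p_1}$ with both the long-path construction of Theorem~\ref{thm:linearpath} and the subsequent rotations, so that the union bound only ranges over subsets of a bounded-size exploration region (of size $O(k)$) rather than over all of $V(G)$.
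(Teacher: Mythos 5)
Your high-level strategy (grow a long path via Theorem~\ref{thm:linearpath}, then use P\'osa rotations plus sprinkling to push the length to $k$) is natural, and it is indeed what the paper does at the top level. But the proposal as written has two genuine gaps, and the second one is fatal in the form you present it.

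\textbf{The iteration count does not fit the sprinkling budget.} With $p_1=\omega/k$ and $\omega\ll\log k$, Theorem~\ref{thm:linearpath} gives a path of length $(1-2\omega^{-1/2})k$, so the deficit from $k$ is $\Theta(k/\sqrt{\omega})\ge\Theta(k/\sqrt{\log k})$. Each of your rotation--extension rounds lengthens the path by only $1$ and must succeed on a candidate set of size roughly $\alpha k$. If you split the remaining probability mass $p_2\approx\log k/k$ into $m$ rounds, the per-round failure probability is about $e^{-\alpha k p_2/m}=e^{-\alpha\log k/m}$, and for this to be $o(1/m)$ you need $m=O(\log k/\log\log k)$. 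That is nowhere near $k/\sqrt{\log k}$. Equivalently, in the ``assume $P$ is a longest path in $G_{p_1}\cup G_{p_2}$ and derive a contradiction'' phrasing, you cannot condition on $G_{p_1}$, build $A$, and then treat $G_{p_2}$ as fresh: the choice of the (longest) path $P$ in $G_{p_1}\cup G_{p_2}$ already uses $G_{p_2}$, so the candidate extension edges are not independent of $P$. You need either a much better starting length (so that the number of rounds is $O(\log k/\log\log k)$ at most) or a quadratically larger candidate set per round. The paper does both inside Theorem~\ref{thm:rotext}: it first shows the path already covers all but $k/(\log k)^{3/4}$ vertices of a small host, and each round it rotates \emph{both} endpoints, producing $\Omega(k^2)$ candidate closing pairs, so that $p_i=(\log k)^{5/4}/k^2$ suffices per round with total $\sum p_i=(\log k)^{1/2}/k$, comfortably within budget.

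\textbf{The union bound/exploration issue is not actually resolved.} You acknowledge that a na\"ive union bound over small sets $S\subseteq V(G)$ fails when $|V(G)|$ is super-polynomial in $k$, and you propose to ``couple the exposure'' so that everything stays in a bounded region. But the P\'osa expansion you need is of the form $|N_{G_{p_1}}(S)\setminus S|>2|S|$ for $S\subseteq V(P)$, and these neighborhoods are \emph{not} confined to $V(P)$: the only control you have is a posteriori, from the maximality of $P$. There is no obvious reason the DFS exploration plus rotations confines the union bound to $O(k)$ vertices --- the set of ``bad'' pairs $(S,N(S))$ lives in all of $V(G)$. The paper sidesteps this entirely by a case analysis that explicitly constructs a local region $A_0$ or $A_1$ of size at most $(1+\varepsilon)k$ (the vertices on $P$ plus vertices with many $P$-neighbors), falls back to growing the path outward via Theorem~\ref{thm:linearpath_strong}(ii)--(iii) whenever the endpoint has many neighbors outside that region, and only applies the rotation--extension machinery (Theorem~\ref{thm:rotext}) once it has reduced to a graph on at most $(1+\varepsilon)k$ vertices with minimum degree $(1-O(\varepsilon))k$. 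This reduction is the missing structural step your proposal needs before any rotation argument can be made rigorous.

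In short: the two-round decomposition and the appeal to P\'osa rotations are compatible with the paper's spirit, but without (a) an intermediate step pushing the path length to within $o(k/\log k)$ of $k$ (or a mechanism producing $\Omega(k^2)$ candidate closing edges per round), and (b) a reduction to a vertex set of size $O(k)$ that makes the expansion union bound legitimate, the argument does not go through.
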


Note that we parameterize our
graph in terms of its minimum degree. Hence it should be
understood that there are underlying sequences of graphs $\{G_i\}_{i=1}^{\infty}$
and probabilities $\{p_i\}_{i=1}^{\infty}$ where the minimum degrees of
graphs tend to infinity, and the statements above hold with probability
tending to $1$ as $i$ tends to infinity.

Since we can take $G$ to be the complete graph $K_{k+1}$ on $k+1$
vertices, our theorems can be viewed as generalizations of classical
results on the existence of long paths in $G(n,p)$. In particular,
Theorem~\ref{thm:linearpath} generalizes the result of Ajtai,
Koml\'os, and Szemer\'edi and of Fernandez de la Vega, and
Theorem~\ref{thm:hamiltonpath} generalizes the result of Bollob\'as,
and of Koml\'os and Szemer\'edi.

Our theorem can also be placed in a slightly different context.
Recently there has been a number of papers revisiting classical
extremal graph theoretical results of the type `if a graph $G$
satisfies certain condition, then it has some property
$\mathcal{P}$', by asking the following question: ``How strongly
does $G$ possess $\mathcal{P}$?". In other words, one attempts to
measure the robustness of $G$ with respect to the property
$\mathcal{P}$. For example, call a graph on $n$ vertices a
\emph{Dirac graph}, if it has minimum degree at least $\frac{n}{2}$.
Consider the above mentioned theorem which asserts that all Dirac
graphs are Hamiltonian. There are several possible ways one can
measure the robustness of this theorem. Cuckler and Kahn
\cite{CuKa}, confirming a conjecture of S\'ark\"ozy, Selkow, and
Szemer\'edi \cite{SaSeSz}, measured the robustness by counting the
minimum number of Hamilton cycles in Dirac graphs and proved that
all Dirac graphs contain at least $\frac{n!}{(2+o(1))^{n}}$ Hamilton
cycles. In a recent paper \cite{KrLeSu}, we measured the robustness
by taking random subgraphs of Dirac graphs and proved that for every
Dirac graph $G$ on $n$ vertices and $p \gg \frac{\log n}{n}$, a
random subgraph $G_p$ is a.a.s.~Hamiltonian. In the same paper, we
also discussed an alternative measure of robustness where one
analyzes the biased Maker-Breaker Hamiltonicity game on the Dirac
graph. The concept of \emph{resilience} of graphs is another
framework which allows one to measure robustness of graphs. See,
e.g., the paper of Sudakov and Vu \cite{SuVu} for more details. Note
that Theorem \ref{thm:hamiltonpath} in fact measures the robustness
of graphs of minimum degree at least $k$ with respect to containing
paths of length $k$, by taking random subgraphs.

We can also a.a.s. find long cycles in random subgraphs of graphs
with large minimum degree.
\begin{thm}
\label{thm:linearcycle}
Let $\omega$ be a function tending to infinity with $k$
and let $\varepsilon$ be a fixed positive real.
For a finite graph $G$ of minimum degree at least $k$ and 
$p \ge \frac{\omega}{k}$, $G_{p}$ a.a.s. contains a cycle 
of length at least $(1-\varepsilon)k$.
\end{thm}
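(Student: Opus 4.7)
The plan is to derive Theorem~\ref{thm:linearcycle} from Theorem~\ref{thm:linearpath} by combining two-round exposure with P\'osa's rotation-extension technique. Expose $G_p$ in two independent rounds with parameters $p_1, p_2$ satisfying $1-p = (1-p_1)(1-p_2)$ and $p_1 = p_2 \geq \omega'/k$ for some $\omega' = \omega'(\omega) \to \infty$ (e.g., $\omega' = \omega/3$); this realises $G_p$ as the union $G_{p_1} \cup G_{p_2}$ of two independent random subgraphs, each satisfying the hypothesis of Theorem~\ref{thm:linearpath}. Apply that theorem to $G_{p_1}$ to obtain a.a.s.\ a path of length at least $(1 - \varepsilon/3)k$, and let $P$ be a longest path in $G_{p_1}$.

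Next, run P\'osa's rotation-extension technique inside $G_{p_1}$ starting from $P$. Let $S_u$ be the rotation closure at the endpoint $u$ (keeping the other endpoint $v$ fixed), and for each $u' \in S_u$ let $S_{v,u'}$ denote the closure at the $v$-end of the resulting longest path from $u'$ to $v$. The key structural property is that for any $u' \in S_u$ and $v' \in S_{v, u'}$ there is a path from $u'$ to $v'$ in $G_{p_1}$ with vertex set $V(P)$, so a single edge $u'v' \in G_{p_2}$ completes a cycle of length $|V(P)| \geq (1-\varepsilon)k$. The classical P\'osa lemma bounds $|N_{G_{p_1}}(S_u) \cap V(P)| \leq 3|S_u|$ (and similarly for each $S_{v,u'}$); combined with a suitable expansion property of $G_{p_1}$ on $V(P)$, this would force $|S_u|$ and each $|S_{v, u'}|$ to be at least $\alpha k$ for some $\alpha = \alpha(\varepsilon) > 0$.

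In the last step, I would use the fresh randomness of $G_{p_2}$ to close one of these candidate pairs. Since $G$ has minimum degree $k$ and the closures are $\Omega(k)$-sized, a counting argument should yield enough pairs $(u',v') \in S_u \times S_{v,u'}$ with $u'v' \in E(G)$ so that the probability that no such pair is joined by an edge of $G_{p_2}$ tends to zero. The independence of $G_{p_2}$ from $G_{p_1}$ is essential, since it allows one to first fix the candidate pairs from the first-round exposure and only then reveal the closing randomness.

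The hard part will be establishing the required expansion of $G_{p_1}$ on $V(P)$: since $|V(G)|$ may be much larger than $k+1$, a naive union bound over $X \subseteq V(G)$ is too wasteful. I would handle this by splitting the first round into two sub-rounds, using the first sub-round to fix the vertex set $V(P)$ and applying the expansion argument in the second sub-round with the union bound restricted to subsets of $V(P)$. A key tightening is that every vertex of a rotation closure has all of its $G_{p_1}$-neighbors inside $V(P)$ (otherwise $P$ could be extended), which severely constrains the events that can cause expansion failure and makes the union bound inside $V(P)$ tractable.
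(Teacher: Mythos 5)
Your proposal departs completely from the paper's route: the paper does not use rotation--extension for Theorem~\ref{thm:linearcycle} at all. Its proof rests on a structural decomposition (Lemma~\ref{lem:structure}) showing that $G_p$ a.a.s.\ splits into a collection $\mathcal{C}$ of highly connected pieces plus a small transit set $Y$, and then finishes either by iterating the DFS cycle-finder inside one dense piece (Lemma~\ref{lem:iterative_cycle}) or by finding vertex-disjoint cycles in many pieces and gluing them across the transit set using high connectivity (Lemma~\ref{lem:combine_cycle}). Rotation--extension appears only in Theorem~\ref{thm:rotext}, whose extra hypotheses $n\le(1+\varepsilon)k$ and minimum degree $\ge(1-\varepsilon)k$ force the \emph{host} graph to be dense --- precisely what is not available here.

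That missing density is where your argument breaks. You have no control over the induced graph $G[V(P)]$: a vertex $v\in V(P)$ has $\deg_G(v)\ge k$, but almost all of those $G$-edges may leave $V(P)$, so $G[V(P)]$ can be close to just the path $P$ with essentially no chords. This undermines both halves of your plan. The expansion step needs a supply of potential chords inside $V(P)$ for $G_{p_1}[V(P)]$ to expand with, and your ``tightening'' (that every vertex of a rotation closure has all its $G_{p_1}$-neighbours in $V(P)$) gives useful leverage only when each $v\in V(P)$ has $\Omega(k)$ $G$-edges leaving $V(P)$ --- but this fails as soon as $|V(P)|\ge k$, and $P$ is a \emph{longest} path, so $|V(P)|$ may be far larger than $k$. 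The two-sub-round device also has a conditioning defect: once the second sub-round's edges are added, $P$ need not remain a longest path of $G_{p_1}$, and the inclusion $N_{G_{p_1}}(S_u)\subseteq V(P)$ that powers the P\'osa bound can fail. The closing step has the same defect: even with $|S_u|,|S_{v,u'}|=\Omega(k)$, you need $\Omega(k^2/\omega)$ of the pairs $(u',v')$ to be edges of $G$, not merely pairs in $V(P)^2$, and if $G[V(P)]$ is sparse (or if $S_u$ and $S_{v,u'}$ concentrate near the two far ends of a long path, as happens e.g.\ in a power of a long path/cycle, where all $G$-edges are short-range) there may be essentially none. The paper's concluding remarks flag this exact pitfall: they exhibit a blow-up of a directed cycle to show that arguments which reduce to a cycle-finding problem in a random subgraph of a sparse quotient cannot succeed, and explain that this is why Theorem~\ref{thm:linearcycle} needed the structural decomposition rather than a direct rotation-based argument.
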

Similarly to above, Theorem \ref{thm:linearcycle} can be considered
as a generalization of Bollob\'as, Fenner, and Frieze's result. Also
note that this theorem implies a weak form of Theorem
\ref{thm:linearpath}. The proof of this theorem is much more
involved compared to the two previous theorems.

The main technique we use in proving our theorems is a technique
recently developed in \cite{BeKrSu,KrSu}, based on the depth first
search algorithm. In Section \ref{sec:Preliminaries}, we discuss
this technique in detail and also provide some probabilistic tools
that we will need later. Using these tools, in Section
\ref{sec:paths} we prove Theorems \ref{thm:linearpath} and
\ref{thm:hamiltonpath}. Then in Section \ref{sec:cycles} we prove
Theorem \ref{thm:linearcycle}.

\medskip{}

\noindent \textbf{Notation}. A graph $G=(V,E)$ is given by a pair
of its vertex set $V=V(G)$ and edge set $E=E(G)$. We use $|G|$
or $|V|$ to denote the order of the graph. For a subset $X$ of vertices,
we use $e(X)$ to denote the number of edges spanned by $X$, and for
two sets $X,Y$, we use $e(X,Y)$ to denote the number of pairs
$(x,y)$ such that $x\in X,\, y\in Y$, for which $\{x,y\}$ is
an edge (note that $e(X,X)=2e(X)$). $G[X]$
denotes the subgraph of $G$ induced by a subset of vertices $X$.
We use $N(X)$ to denote the collection of vertices which are adjacent
to some vertex of $X$ (we do not require $X$ and $N(X)$ to be
disjoint). For two graphs $G_{1}$ and $G_{2}$ over
the same vertex set $V$, we define their intersection as $G_{1}\cap G_{2}=(V,E(G_{1})\cap E(G_{2}))$,
their union as $G_{1}\cup G_{2}=(V,E(G_{1})\cup E(G_{2}))$, and their
difference as $G_{1}\setminus G_{2}=(V,E(G_{1})\setminus E(G_{2}))$.
Moreover, we let $G\setminus X$ be the induced subgraph $G[V\setminus X]$.

When there are several graphs under consideration, to avoid
ambiguity, we use subscripts such as \emph{$N_{G}(X)$} to indicate
the graph that we are currently interested in. We also use
subscripts with asymptotic notations to indicate dependency. For
example, $O_{\varepsilon}$ will be used to indicate that the hidden
constant depends on $\varepsilon$. To simplify the presentation, we
often omit floor and ceiling signs whenever these are not crucial
and make no attempts to optimize absolute constants involved. We
also assume that the parameter $k$ (which will denote the minimum
degree of the graph under consideration) tends to infinity and
therefore is sufficiently large whenever necessary. All logarithms
will be in base \emph{$e\approx2.718$}.

\section{Preliminaries\label{sec:Preliminaries}}

\subsection{Depth first search algorithm}

Our argument will utilize repeatedly the notion of the depth first
search algorithm. This is a well known graph exploration algorithm,
and we briefly describe it in this section.

The DFS (standing for Depth First Search) algorithm is a graph
search algorithm that visits all vertices of a graph $G=(V,E)$ as
follows. It maintains three sets of vertices, where $S$ is the set
of vertices whose exploration is complete, $T$ is the set of
unvisited vertices, and $U=V\setminus(S\cup T)$. The vertices of $U$
are kept in a stack (the last in, first out data structure). These
three sets will be updated as the algorithm proceeds. We assume that
some order $\sigma$ on the vertices of $G$ is fixed, and the
algorithm prioritizes vertices according to $\sigma$. The algorithm
starts with $S=U=\emptyset$ and $T=V$, and runs until $U\cup
T=\emptyset$. At each round of the algorithm, if the set $U$ is
non-empty, the algorithm queries $T$ for neighbors of the last
vertex $v$ that has been added to $U$, scanning $T$ according to
$\sigma$. If $v$ has a neighbor $u$ in $T$, the algorithm deletes
$u$ from $T$ and inserts it into $U$. If $v$ does not have a
neighbor in $T$, then $v$ is popped out of $U$ and is moved to $S$.
If $U$ is empty, the algorithm chooses the first vertex of $T$
according to $\sigma$, deletes it from $T$ and pushes it into $U$.

Observe that at the time we reach $U\cup T=\emptyset$, we obtain
a rooted spanning forest of our graph (the root of each tree is the
first vertex added to it). At this stage,
in order to complete the exploration of the graph, we make the algorithm
to query all remaining pairs of vertices in $S=V$, not queried before,
in an arbitrary fixed order.

The following properties of the DFS algorithm will be relevant to
us:
\begin{itemize}
  \setlength{\itemsep}{1pt} \setlength{\parskip}{0pt}
  \setlength{\parsep}{0pt}
\item if $T \neq \emptyset$, then every positively answered query increases the size of $S\cup U$ by
one (however, note that having
$h$ positive queries will only guarantee that $|S\cup U|\ge h$,
not $|S\cup U|=h$, since $|S \cup U|$ can also increase at a step where
the stack $U$ is empty);
\item the set $U$ always spans a path (indeed, when a vertex $u$ is added
to $U$, it happens so because $u$ is a neighbor of the last vertex
$v$ in $U$; thus, $u$ augments the path spanned by $U$, of which
$v$ is the last vertex);
\item at any stage, $G$ has no edges between the current
set $S$ and the current set $T$;
\item for every edge $\{v,w\}$ of the graph, there exists a tree component
in the forest produced by the DFS algorithm, in which
$v$ lies on the path from $w$ to the root of the component, or vice versa.
\end{itemize}
In this paper, we utilize the DFS algorithm on random graphs, and
will expose an edge only at the moment at which the existence of it
is queried by the algorithm. More precisely, given a graph $G$ and a
real $p\in[0,1]$, fix an order $\sigma$ to be an arbitrary
permutation, and assume that there is an underlying sequence
$\overline{X}=(X_{i})_{i=1}^{e(G)}$ of i.i.d. Bernoulli random
variables with parameter $p$, which we call as the \emph{query
sequence}. The DFS algorithm gets an answer to its $i$-th query,
asking whether some edge of $G$ exists in $G_{p}$ or not, according to
the value of $X_{i}$; thus the query is answered positively if
$X_{i}=1$, and is answered negatively otherwise. Hence, 
the edge in $G$ whose existence will be
examined on the $i$-th query, depends on the outcome of the 
randomized algorithm. Note that the
obtained graph is distributed according to $G_{p}$. Recently, 
Krivelevich and Sudakov \cite{KrSu} successfully used this
idea to give a simple proof that $p = \frac{1}{n}$ is a sharp
threshold for the appearance of a giant component in a random graph.

\subsection{Probabilistic tools}

We will repeatedly use the technique known as \emph{sprinkling}.
Suppose that for some probability $p$, we wish to establish the fact
$G_{p}\in\mathcal{P}$. It is often more convenient to establish this
fact indirectly by choosing $p_{1}$ and $p_{2}$ so that $G_{p}$ and
$G_{p_{1}}\cup G_{p_{2}}$ have the same distribution ($G_{p_{1}}$
and $G_{p_{2}}$ are independent). We then prove that
$G_{p_{1}}\in\mathcal{P}_{1}$ and $G_{p_{2}}\in\mathcal{P}_{2}$ for
some properties $\mathcal{P}_{1}$ and $\mathcal{P}_{2}$ which
together will imply the fact that $G_{p_{1}}\cup
G_{p_{2}}\in\mathcal{P}$. Of course, a similar argument can be
applied when we split the graph into several independent copies of
random subgraphs formed with probabilities
$p_{1},p_{2},\cdots,p_{k}$. Suppose that we have reals $0 \le p,
p_1, \cdots, p_k \le 1$ and $p_1, \cdots, p_k = o(1)$ satisfying
$p=\sum_{i=1}^{k}p_{i}$. Then the probability of a fixed pair of
vertices forming an edge in $G_{p_1} \cup \cdots \cup G_{p_k}$ is $1
- (1-p_1)\cdots(1-p_k) = (1-o(1))p$. Therefore, in this case
$G_{p_{1}}\cup\cdots\cup G_{p_{k}}$ has the same distribution as
$G_{(1-o(1))p}$, and thus for convenience, we consider
$G_{p_{1}}\cup\cdots\cup G_{p_{k}}$ instead of the graph $G_{p}$,
even though the distribution is not exactly the same. Since we are
only interested in monotone properties $\mathcal{P}$, if we have
$G_{(1-o(1))p}\in\mathcal{P}$ a.a.s., then we also have
$G_{p}\in\mathcal{P}$ a.a.s. Moreover, when we are given the values
of $p_{1},p_{2},\ldots$ beforehand, it is useful to expose the
graphs $G_{p_{i}}$ one at a time. By saying that we \emph{sprinkle}
the next round of edges, we suppose that we consider the outcome of
the graph $G_{p_{i}}$ for the first index $i$ for which $G_{p_{i}}$
has not been exposed.

The following two concentration results are the main probabilistic
tools of this paper (see, e.g., \cite{McDiarmid}). The first theorem
is Chernoff's inequality.
\begin{thm}
Let $\lambda\le np$ be a positive real. If $X$ is a binomial random
variable with parameters $n$ and $p$, then
\[
\mathbb{P}\big(|X-np|\geq\lambda\big)\leq2e^{-\lambda^{2}/(3np)}.
\]

\end{thm}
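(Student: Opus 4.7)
The plan is to apply the exponential moment (Chernoff) method to the upper and lower tails of $X$ separately and then combine the two bounds by a union bound, absorbing the factor of $2$.

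First I would write $X=\sum_{i=1}^{n}X_{i}$ as a sum of i.i.d.\ Bernoulli$(p)$ random variables. For any $t>0$, Markov's inequality applied to $e^{tX}$ gives
\[
\mathbb{P}(X\ge np+\lambda)\le e^{-t(np+\lambda)}\,\mathbb{E}[e^{tX}]=e^{-t(np+\lambda)}\bigl(1-p+pe^{t}\bigr)^{n}.
\]
Using $1+x\le e^{x}$ on the base I would bound $\bigl(1-p+pe^{t}\bigr)^{n}\le\exp\bigl(np(e^{t}-1)\bigr)$, so the whole right-hand side is at most $\exp\bigl(np(e^{t}-1)-t(np+\lambda)\bigr)$.

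Next I would optimize in $t$ by choosing $t=\log\bigl(1+\lambda/(np)\bigr)>0$, which reduces the exponent to $-np\cdot g\bigl(\lambda/(np)\bigr)$ where $g(\delta):=(1+\delta)\log(1+\delta)-\delta$. The key analytic input is then the elementary estimate $g(\delta)\ge \delta^{2}/3$ for all $\delta\in[0,1]$; this is exactly where the hypothesis $\lambda\le np$ enters and why the constant $3$ appears in the denominator. The inequality follows from checking that $f(\delta):=g(\delta)-\delta^{2}/3$ satisfies $f(0)=f'(0)=0$ and $f'(\delta)>0$ on $(0,1]$: indeed $f''(\delta)=1/(1+\delta)-2/3$ changes sign only once on $[0,1]$, and a direct evaluation shows $f'(1/2)$ and $f'(1)$ are both positive.

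For the lower tail $\mathbb{P}(X\le np-\lambda)$, I would run the same argument with $t<0$ (equivalently, apply the upper-tail scheme to $n-X$ with success probability $1-p$), arriving at the even stronger bound $e^{-\lambda^{2}/(2np)}$, which already requires no restriction on $\lambda$ beyond $\lambda\le np$. A union bound on the two tails then yields the factor of $2$ and completes the proof. The only step that needs genuine care is the Taylor estimate for $g$; everything else is routine.
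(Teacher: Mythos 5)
The paper does not prove this statement: it is quoted as a standard tool with a citation to McDiarmid's survey, so there is no ``paper's own proof'' to compare against. Your argument is the textbook exponential-moment (Bernstein/Chernoff) proof and is correct: the optimization $t=\log(1+\lambda/(np))$ yields the exponent $-np\,g(\delta)$ with $g(\delta)=(1+\delta)\log(1+\delta)-\delta$, your verification that $g(\delta)\ge\delta^2/3$ on $[0,1]$ via $f(0)=f'(0)=0$, $f''$ changing sign once, and $f'(1)=\log 2-2/3>0$ is sound (checking $f'(1/2)$ is redundant but harmless), and the lower-tail bound $e^{-\lambda^2/(2np)}$ is the standard stronger estimate, so the union bound gives the claimed $2e^{-\lambda^2/(3np)}$ under the hypothesis $\lambda\le np$.
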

We will also use the following concentration result proved by Hoeffding
\cite[Theorem 2.3]{McDiarmid}.
\begin{thm}
Let $X_{1},\cdots,X_{n}$ be independent random variables, with $0\le X_{k}\le1$
for each $k$. Let $S_{n}=\sum_{k=1}^{n}X_{k}$ and $\mu=\BBE[S_{n}]$.
Then for every positive $\varepsilon$,
\[
\BBP(S_{n}\ge(1+\varepsilon)\mu)\le e^{-\frac{\varepsilon^{2}\mu}{2(1+\varepsilon/3)}}.
\]

\end{thm}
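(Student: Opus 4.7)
The plan is to follow the classical exponential moment (Chernoff) method. For any $t > 0$, Markov's inequality applied to $e^{tS_n}$ and the independence of the $X_k$ give
$$\BBP(S_n \ge (1+\varepsilon)\mu) \;\le\; e^{-t(1+\varepsilon)\mu}\,\BBE[e^{tS_n}] \;=\; e^{-t(1+\varepsilon)\mu}\prod_{k=1}^{n}\BBE[e^{tX_k}].$$
Since $X_k\in[0,1]$, convexity of $x\mapsto e^{tx}$ on $[0,1]$ yields $e^{tX_k}\le 1+(e^t-1)X_k$, so $\BBE[e^{tX_k}]\le 1+(e^t-1)\BBE[X_k]\le \exp\bigl((e^t-1)\BBE[X_k]\bigr)$. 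Multiplying over $k$ gives $\prod_k \BBE[e^{tX_k}]\le \exp\bigl((e^t-1)\mu\bigr)$, and therefore
$$\BBP(S_n \ge (1+\varepsilon)\mu) \;\le\; \exp\Bigl(\mu\bigl[(e^t-1)-t(1+\varepsilon)\bigr]\Bigr).$$

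Next I would optimize in $t$. The choice $t=\log(1+\varepsilon)$ makes $e^t-1=\varepsilon$, yielding the standard form
$$\BBP(S_n \ge (1+\varepsilon)\mu) \;\le\; \exp\Bigl(-\mu\bigl[(1+\varepsilon)\log(1+\varepsilon)-\varepsilon\bigr]\Bigr).$$
The last step is a purely analytic calculus lemma: for all $\varepsilon>0$,
$$(1+\varepsilon)\log(1+\varepsilon)-\varepsilon \;\ge\; \frac{\varepsilon^2}{2(1+\varepsilon/3)}.$$
This can be checked by setting $f(\varepsilon)$ equal to the difference of the two sides, noting $f(0)=0$, and verifying $f'(\varepsilon)\ge 0$ for $\varepsilon\ge 0$; alternatively one expands $(1+\varepsilon)\log(1+\varepsilon)=\sum_{n\ge 2}(-1)^n\varepsilon^n/(n(n-1))$ and compares term-by-term with the rational function on the right.

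The probabilistic content is entirely routine: Markov applied to the MGF, together with the standard pointwise bound $1+ax\le e^{ax}$ used in Chernoff-type arguments. The only slightly delicate point is the final analytic inequality, which is exactly what forces the exponent to take the mixed form $\varepsilon^2/(2(1+\varepsilon/3))$, interpolating between the sub-Gaussian regime $e^{-\varepsilon^2\mu/2}$ for small $\varepsilon$ (where $(1+\varepsilon)\log(1+\varepsilon)-\varepsilon \sim \varepsilon^2/2$) and the Poisson-type regime for large $\varepsilon$. Since this is a textbook result attributed to Hoeffding (via McDiarmid's survey, as cited in the excerpt), in the paper itself I would simply quote it rather than reproduce the derivation.
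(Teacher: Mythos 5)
Your derivation is correct and is the standard Chernoff--Hoeffding argument: Markov on $e^{tS_n}$, the convexity bound $\BBE[e^{tX_k}]\le 1+(e^t-1)\BBE[X_k]\le e^{(e^t-1)\BBE[X_k]}$, the optimal choice $t=\log(1+\varepsilon)$, and finally the calculus fact $(1+\varepsilon)\log(1+\varepsilon)-\varepsilon\ge \varepsilon^2/(2(1+\varepsilon/3))$, which one can verify by checking that both sides and their first derivatives vanish at $\varepsilon=0$ while the second derivatives satisfy $\frac{1}{1+\varepsilon}\ge\frac{27}{(3+\varepsilon)^3}$, equivalently $9\varepsilon^2+\varepsilon^3\ge 0$. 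As you anticipated, the paper gives no proof of this theorem at all --- it is stated with a citation to McDiarmid's survey (Theorem 2.3 there) --- so there is no in-paper argument to compare against; your proposal simply supplies the standard textbook proof the authors chose to outsource.
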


\section{Long Paths\label{sec:paths}}

In this section we prove Theorems \ref{thm:linearpath} and \ref{thm:hamiltonpath}.
Our first theorem is a slightly stronger version of Theorem \ref{thm:linearpath}.
\begin{thm}
\label{thm:linearpath_strong}Let $p=\frac{c}{k}$ for some $c = o(k)$, 
and let $G$ be a graph of minimum degree at least $k$.

\begin{itemize}
  \setlength{\itemsep}{1pt} \setlength{\parskip}{0pt}
  \setlength{\parsep}{0pt}
\item[(i)]  $G_p$ a.a.s. contains a path of length $(1-2c^{-1/2})k$.
\item[(ii)] If $G$ is a bipartite graph, then $G_p$ a.a.s. contains a path of length $(2-6c^{-1/2})k$.
\item[(iii)] If $c$ tends to infinity with $k$, then for a fixed vertex $v$, there 
a.a.s.~exists a path of length $(1-2c^{-1/2})k$ in $G_p$ which starts at vertex $v$.
\end{itemize}
\end{thm}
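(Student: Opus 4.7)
My plan is to prove all three parts by running the DFS of Section~\ref{sec:Preliminaries} on $G$ with an i.i.d.\ Bernoulli$(p)$ query sequence $X_1,X_2,\ldots$, so that the exposed graph has the distribution of $G_p$. Since the stack $U$ always spans a path of $|U|$ vertices in $G_p$, it suffices to exhibit a moment of the run at which $|U|\ge \ell+1$ for the targeted length $\ell$.

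For (i), set $\eta=2c^{-1/2}$ (the case $c\le 4$ is vacuous) and $\ell=(1-\eta)k$, and suppose for contradiction that $|U|\le \ell$ throughout. Let $\tau$ be the first moment at which $|S|=\eta k$; this exists because $|V(G)|\ge k+1$ and $S=V$ at the end of DFS. When the $i$-th vertex $v$ was popped into $S$ one had $|S\cup U|\le i+\ell$, so $v$ had at least $k-(i+\ell-1)=\eta k-i+1$ $G$-neighbors in the then-current $T$, all queried negatively. Summing over $i=1,\ldots,\eta k$ yields at least $\eta^2 k^2/2=2k^2/c$ negative queries by time $\tau$, so $M_\tau\ge M_0:=2k^2/c$. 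On the other hand the number of positive queries by time $\tau$ is bounded by $|S_\tau\cup U_\tau|\le \eta k+\ell=k$. Since the $X_i$'s are i.i.d., Chernoff's bound gives that the number of positives among the first $M_0$ draws is at least $(3/2)k$ a.a.s., contradicting the bound just derived.

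For (ii), the same template is adapted using $|V(G)|\ge 2k$ (both sides of the bipartition contain at least $k$ vertices) and the alternation $|U\cap A|,|U\cap B|\le |U|/2+1$. With $\varepsilon=6c^{-1/2}$, $\ell=(2-\varepsilon)k$, and $\tau$ the first moment at which $|S|=\varepsilon k/2$, the negative queries of a popped $v\in A$ are bounded using that all neighbors of $v$ lie in $B$, so $|(S\cup U)\cap B|_v\le (i-1)+(1-\varepsilon/2)k+1$, which yields at least $\varepsilon k/2-i$ negative queries regardless of which side of the bipartition $v$ lies in. Summing gives $\gtrsim \varepsilon^2 k^2/8=9k^2/(2c)$ negatives by time $\tau$, while the positives at time $\tau$ are at most $|S_\tau\cup U_\tau|\le \varepsilon k/2+\ell<2k$. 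Chernoff with $M_0=9k^2/(2c)$, $M_0p=9k/2$ then delivers the required contradiction.

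For (iii), I would run the same DFS but with the order $\sigma$ placing $v$ first, so that while $v\in U$ the stack spans a path with $v$ as one endpoint; hitting $|U|\ge \ell+1$ before $v$ is popped thus produces a path of length $\ell$ starting at $v$. The argument of (i), applied within $v$'s DFS subtree, goes through verbatim as long as $|S|$ has time to reach $\eta k$ before $v$ is popped, which amounts to $v$'s $G_p$-component containing at least $\eta k+1$ vertices. The main obstacle is to secure this lower bound on the component of $v$, and this is precisely where the extra hypothesis $c\to\infty$ enters: since $v$ has $\gtrsim c$ $G_p$-neighbors a.a.s., and each of those again has $\gtrsim c$ $G_p$-neighbors, a BFS expansion from $v$ blows up by roughly a factor of $c$ per layer and so reaches $\ge \eta k$ vertices a.a.s. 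A more flexible alternative is to sprinkle $p=p_1+p_2$ with $p_2 k\to\infty$: apply (i) to $G_{p_1}$ to obtain a long path $P$, observe that $v$ has $\sim p_2 k$ $G_{p_2}$-neighbors and a.a.s.\ at least one of them lies on $P$, and use that edge to graft $v$ onto a sufficiently long terminal subpath of $P$; the small length loss incurred by sprinkling is absorbable because $\eta=o(1)$ when $c\to\infty$.
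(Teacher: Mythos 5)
Parts (i) and (ii) are correct, and at heart they use the same engine as the paper: run the DFS against a Bernoulli query sequence, and show that if $|U|$ were always small, the $G$-edges from popped $S$-vertices into the current $T$ would force too many negative answers relative to the positives Chernoff supplies. Your bookkeeping differs slightly — you sum, over the first $\eta k$ popped vertices, the at-least-$\eta k - i + 1$ negatives each contributes, deducing a lower bound on the total number of queries by time $\tau$, whereas the paper fixes a query budget ($k^2/c$ or $2k^2/c$) up front and counts $S$--$T$ nonedges at the single moment $|S\cup U|$ hits a threshold. These are interchangeable, and both appeals to Chernoff are fine.

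Part (iii) has a genuine gap. Your first route correctly identifies the crux — you must show a.a.s.\ that $v$'s $G_p$-component has at least $\eta k$ vertices, so that $v$ is still the bottom of the stack at the moment $|U|$ becomes large — but the assertion that a BFS from $v$ ``blows up by roughly a factor of $c$ per layer'' is a heuristic, not a proof; to close it you would need a domination by a branching process or, as the paper does, a direct union bound. The paper's argument is short and quantitative: let $A_i$ be the event that $U=\emptyset$ at the moment $|S\cup U| = i$; then $A_i$ forces at most $i$ positives among at least $i\varepsilon k$ queries, so $\BBP(A_i)\le 2e^{-ic^{1/2}/12}$, and $\sum_{i\le(1-\varepsilon)k}\BBP(A_i) = o(1)$ as $c\to\infty$. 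Your second route is flawed: after exposing $G_{p_1}$, there is no reason $N_G(v)$ should meet $V(P)$. If $|V(G)|$ is much larger than $k$, both $N_G(v)$ and $V(P)$ are merely $\Theta(k)$-sized subsets of $V(G)$ and can be disjoint; concretely, if $v$'s $G_{p_1}$-component is small, $v$ is popped early and the stack path $P$ found later may live in a component disjoint from $N_G(v)$, in which case no $G_{p_2}$-sprinkling grafts $v$ onto $P$. So the claim ``a.a.s.\ at least one of the $\sim p_2k$ $G_{p_2}$-neighbors of $v$ lies on $P$'' is unjustified, and this route in fact circles back to the same unproven component-size bound.
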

\begin{proof}If $c<4$, then the conclusions are vacuously true. Thus
we may assume $c\ge4$, and then for $\varepsilon:=c^{-1/2}$, we have
$\varepsilon\le\frac{1}{2}$.

We will apply the DFS algorithm to the random graph $G_p$, as described
in Section \ref{sec:Preliminaries}. Given a vertex $v$, let $\sigma$ be
an arbitrary ordering of the vertices which has $v$ as its first vertex.
Also assume that we have an underlying query sequence $\overline{X}$.

(i) By Chernoff's inequality, after examining the query sequence for
$\frac{k}{p}=\frac{k^{2}}{c}$ rounds, the probability of 
receiving at least $(1-\varepsilon)k$ positive answers is at least
\[ 1 - e^{-\varepsilon^2k^2/(3k)} = 1 - e^{-\varepsilon^2k/3} = 1-o(1). \]
Condition on this event, and we have $|S\cup U|\ge(1-\varepsilon)k$.
Consider the time at which we reach $|S\cup U|=(1-\varepsilon)k$
(since $S=V$ in the end and $|S|$ changes by at most one at each step,
there necessarily exists such a moment).
Note that we asked less than $\frac{k^{2}}{c}$ queries until this stage. Suppose that
at this time our set $U$ is of size at most $(1-2\varepsilon)k$.
Then we have $|S|>\varepsilon k$. Moreover, since the given graph
has minimum degree at least $k$, each vertex in $S$ has at least
$k-|S\cup U|\ge\varepsilon k$ neighbors in $T$ in the graph $G$.
All the edges between $S$ and $T$ must have been queried by the algorithm and given
a negative answer. Therefore, in order to be in a situation as above,
we must have at least $|S|\cdot\varepsilon k>\varepsilon^{2}k^{2}$
negative answers in our query sequence. However, since we asked at
most $\frac{k^{2}}{c}=\varepsilon^{2}k^{2}$ queries in total, this
cannot happen. Thus we conclude that $|U|\ge(1-2\varepsilon)k=(1-2c^{-1/2})k$,
which implies that there exists a path of length at least $(1-2c^{-1/2})k$,
since the vertices in $U$ form a path.

(ii) By Chernoff's inequality, after examining the query sequence
for $\frac{2k}{p}=\frac{2k^{2}}{c}$ rounds, we a.a.s. have $|S\cup U|\ge(2-5\varepsilon)k$.
Condition on this event, and consider the time at which we reach $|S|=\varepsilon k$.
If $|U|>(2-6\varepsilon)k$ at that point,
then the vertices in $U$ form a path of length at least $(2-6\varepsilon)k$.
Thus assume that $|U|\le(2-6\varepsilon)k$. Since $|S\cup U|\le(2-5\varepsilon)k$,
we examined the entries $X_i$ of the sequence $\overline{X}$, only for
indices $i \le \frac{2k^2}{c}$.
Moreover, since the given graph is bipartite, has minimum degree at
least $k$, and $U$ is a path, each vertex in $S$ has at least 
$k-|S|-\frac{1}{2}|U|\ge 2\varepsilon k$
neighbors in $T$ in $G$. Therefore, we must have at least 
$|S|\cdot2\varepsilon k>2\varepsilon^{2}k^{2}$
negative answers in our query sequence so far. However, since we asked at
most $\frac{2k^{2}}{c}=2\varepsilon^{2}k^{2}$ queries, this cannot
happen. Therefore, $G_{p}$ a.a.s. contains a path of length $(2-6c^{-1/2})k$.

(iii) Let $B$ be the event that there are less than $(1-\varepsilon)k$
positive answers among the first $\frac{k}{p}$ rounds of the query
sequence. By Chernoff's inequality, we have 
$\BBP(B)\le2e^{-\varepsilon^{2}k/3} = o(1)$.
As we have seen in the proof of (i), if $B$ does not hold, then
there exists a path of length at least $(1-2\varepsilon)k$.
In order to compute the probability that there is a path of length $(1-2\varepsilon)k$
starting at $v$, we will bound the probability of the event that
$U\neq\emptyset$ during all the steps involved in reaching $|S\cup U|=(1-\varepsilon)k$,
since if this is the case, then the path of length $(1-2\varepsilon)k$ that
we found above necessarily starts at $v$ (recall that $v$ is the first
vertex in $\sigma$).
Let $A_{i}$ be the event that $U=\emptyset$ at the time we reach
$|S\cup U|=i$, thus we necessarily have $|S|=i$ if this event occurs.
Since $i\le(1-\varepsilon)k$, each vertex in $S$ has at least $\varepsilon k$
neighbors in $T$ in $G$ at that moment. Therefore, when $A_{i}$ occurred, we
received at most $i$ positive answers and at least $i\cdot\varepsilon k$
negative answers to our queries. Thus we can bound the probability
that $A_{i}$ occurs from above by the probability of the event that
there are at most $i$ positive answers among the first $i\cdot\varepsilon k$
queries. Hence by Chernoff's inequality with
$\lambda = i\varepsilon c/2 \geq i$, we have 
$\BBP(A_{i})\le2e^{-(i\varepsilon c)^{2}/(12i\varepsilon c)}=2e^{-ic^{1/2}/12}$.

By the union bound, we get
\begin{align*}
\BBP\Big(B\cup\bigcup_{i=1}^{(1-\varepsilon)k}A_{i}\Big) & \le \BBP(B)+\sum_{i=1}^{(1-\varepsilon)k}\BBP(A_{i})
 = o(1) +\sum_{i=1}^{(1-\varepsilon)k}2(e^{-c^{1/2}/12})^{i}\\
 & \le \frac{2}{e^{c^{1/2}/12}-1}+o(1) = o(1),
\end{align*}
since $c$ tends to infinity with $k$. 
\end{proof}

The second and the third parts of our previous theorem turn out to be
useful in proving Theorem \ref{thm:hamiltonpath}.

Given a graph $G$, consider a path $P=(v_0, v_1, \cdots, v_\ell)$ of length $\ell$
in $G$. Suppose we wish to find a path longer than $P$ in $G$. This could
immediately be done if there exists an edge $\{v_\ell, x\}$ for
some $x \notin V(P)$. P\'osa noticed that an edge of the form $\{v_\ell, v_i\}$
can also be useful, since if such an edge is present in the graph, then
we have a path $P'=(v_0, \cdots, v_i, v_\ell, v_{\ell-1}, \cdots, v_{i+1})$
of length $\ell$ in our graph. Therefore, now we also can find a path of length
greater than $\ell$ if there exists an edge of the form $\{v_{i+1}, x\}$ for
some $x \notin V(P)$. P\'osa's rotation-extension technique is employed
by repeatedly `rotating' the path until we can 'extend' it.

We first state
a special case of Theorem \ref{thm:hamiltonpath}, which can be handled
by using P\'osa's rotation-extension technique. Since the proof is quite
standard, we defer it to later.
\begin{thm}
\label{thm:rotext}There exists a positive real
$\varepsilon_0$ such that following holds for every fixed positive real
$\varepsilon \le \varepsilon_0$.
Let $G$ be a graph on $n$ vertices of minimum degree at least $(1-\varepsilon)k$,
and assume that $n\le(1+\varepsilon)k$. For $p \ge \frac{(1+4\varepsilon)\log k}{k}$,
a random subgraph $G_{p}$ is Hamiltonian a.a.s.
\end{thm}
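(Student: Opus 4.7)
The plan is the standard P\'osa rotation-extension argument combined with sprinkling. Write $p = p_1 + p_2$ with $p_1 = (1+3\varepsilon)\log k/k$ and $p_2 = \varepsilon\log k/k$ so that $p_1+p_2=(1+4\varepsilon)\log k/k\le p$; it then suffices to show $G_{p_1} \cup G_{p_2}$ is Hamiltonian a.a.s. The role of $G_{p_1}$ is to provide a pseudo-random skeleton on which rotations can be performed, while $G_{p_2}$ serves as a reservoir of booster edges used both to extend paths and to close a Hamilton cycle.

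The first step is to show that $G_{p_1}$ a.a.s.\ satisfies: \textbf{(a)} minimum degree at least $2$, and \textbf{(b)} a P\'osa-type expansion, namely $|N_{G_{p_1}}(S) \setminus S| \ge 2|S|$ for every $S \subseteq V(G)$ with $|S|$ in an appropriate range (very small $|S|$ being handled by hand using (a) and the almost-complete structure of $G$). Both are routine union-bound computations; the essential quantitative input is that, since $\delta(G)\ge(1-\varepsilon)k$ and $|V(G)|\le(1+\varepsilon)k$, for any $S$ in the main range and any disjoint $T$ with $|T|<2|S|$ one has $e_G(S, V(G) \setminus (S \cup T)) \ge |S|\bigl((1-\varepsilon)k - 3|S|\bigr)$, and at the threshold $p_1 k \ge (1+3\varepsilon)\log k$ this makes $(1-p_1)^{e_G(S, V(G)\setminus(S\cup T))}$ small enough to beat the counting $\binom{n}{|S|}\binom{n}{2|S|}$.

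Given these properties, P\'osa's classical rotation lemma produces, for any longest path $P$ in the current graph and any fixed endpoint $u$, a large set $\mathrm{END}_u(P)$ of second endpoints of longest paths on $V(P)$ obtainable by rotations fixing $u$; rotating again from each $v \in \mathrm{END}_u(P)$ yields $\Omega(R^2)$ candidate endpoint-pairs, where $R$ is the expansion threshold from (b). The extension/closing loop then runs as follows: while $V(P) \ne V(G)$, maximality of $P$ forces $N_{G_{p_1}}(v) \subseteq V(P)$ for every $v \in \mathrm{END}_u(P)$, yet each such $v$ has at least $(1-\varepsilon)k - |V(P)| + 1$ many $G$-neighbors outside $V(P)$, so a sprinkled $G_{p_2}$-edge from $\mathrm{END}_u(P)$ into $V(G) \setminus V(P)$ a.a.s.\ extends $P$; iterating eventually produces a Hamilton path, after which a $G_{p_2}$-edge between some rotated endpoint-pair (guaranteed since $p_2 R^2 \to \infty$) closes a Hamilton cycle.

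The main technical obstacle is calibrating everything at the sharp probability threshold: the union-bound estimates for (a) and (b) must close at $p_1\sim\log k/k$ with only an $\varepsilon$-slack in the exponents, and the sprinkling has to be organized so that each of the up to $n$ extension steps uses independent randomness. The latter is handled by subdividing $p_2$ into $n+1$ independent sprinkling rounds of probability $\Theta(\log k/(kn))$, one per extension step and one for the final closing; since the structural properties of $G_{p_1}$ are already fixed before any sprinkling occurs, each round supplies fresh randomness to a deterministic graph and the per-round concentration estimates become routine.
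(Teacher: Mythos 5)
Your high-level plan mirrors the paper's: build a pseudo-random expander skeleton $G_{p_1}$, run P\'osa rotations to get a large endpoint set, and sprinkle $G_{p_2}$ to extend paths and finally close a cycle. However, your sprinkling schedule has a genuine quantitative gap that prevents the union bound from closing.

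You subdivide $p_2 = \varepsilon \log k/k$ into $n+1 = \Theta(k)$ rounds, each of probability $\Theta(\log k / k^2)$. In a given round you have at most $O(k^2)$ candidate booster pairs, so the expected number of booster edges appearing is $\Theta(\log k)$, and the probability that \emph{none} appears is roughly $e^{-\Theta(\log k)} = k^{-\Theta(\varepsilon)}$ --- only polynomially small. Union-bounding this over $\Theta(k)$ rounds gives a total failure probability of order $k^{1-\Theta(\varepsilon)}$, which diverges rather than going to zero. You cannot afford one sprinkling round per extension step at this probability budget. The paper avoids this by showing, \emph{before any sprinkling}, that the skeleton $G_p$ alone already contains a path covering all but $n/(\log k)^{3/4}$ vertices; the key tool is a ``large-set'' expansion property (their Property 2: every set $A$ with $|A|\ge n/(\log k)^{3/4}$ satisfies $|N_{G_p}(A)|\ge(1-4\varepsilon)k$), which, applied to $V\setminus V(P)$, forces an extension edge inside $G_p$ itself as soon as too many vertices are uncovered. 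Consequently only $n/(\log k)^{3/4}$ sprinkling rounds are needed, each of probability $(\log k)^{5/4}/k^2$, making the per-round failure $e^{-\Omega((\log k)^{5/4})}=o(k^{-1})$, which beats the union bound. Your ratio-$2$ expansion statement does not by itself give this ``long initial path'' conclusion, so you would need to add a second, large-scale expansion property (or an equivalent) to make the argument close.

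A secondary issue: your extension step uses the bound that each $v\in\mathrm{END}_u(P)$ has at least $(1-\varepsilon)k - |V(P)| + 1$ neighbors outside $V(P)$. This is vacuous once $|V(P)| > (1-\varepsilon)k$, i.e., precisely in the regime you most need it. The fix is to count from the other side: each $w \in V\setminus V(P)$ has at least $(1-\varepsilon)k - |V\setminus V(P)| + 1$ $G$-neighbors in $V(P)$, and since both $\mathrm{END}_u(P)$ and $N_G(w)\cap V(P)$ occupy a constant fraction of the at-most-$(1+\varepsilon)k$ vertices, inclusion--exclusion yields $\Omega(k)$ edges from $\mathrm{END}_u(P)$ to $w$. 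But fixing this bound alone does not repair the sprinkling budget, which is the main obstacle.
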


\begin{proof}[Proof of Theorem \ref{thm:hamiltonpath}]
We may assume that $\varepsilon$ is given so that
$\varepsilon \le \min\{\frac{\varepsilon_0}{48}, \frac{1}{67600}\}$
where $\varepsilon_0$ is given in Theorem \ref{thm:rotext}, since the
conclusion for larger values of $\varepsilon$ follows immediately by monotonicity.
Set $p_{1}=p_{2}=p_{3}=\frac{\varepsilon\log k}{k}$,
$p_{4}=\frac{(1+52\varepsilon)\log k}{k}$, and $p=\frac{(1+55\varepsilon)\log k}{k}$.
We will show that $G_{p_{1}}\cup G_{p_{2}}\cup G_{p_{3}}\cup G_{p_{4}}$
a.a.s. contains a path of length $k$. This will in turn imply that
$G_{p}$ a.a.s. contains a path of length $k$ as discussed in Section
\ref{sec:Preliminaries}.

Given a graph $G$ of minimum
degree at least $k$, by Theorem \ref{thm:linearpath}, we know that
$G_{p_{1}}$ a.a.s. contains a path $P$ of length $\ell=(1-\varepsilon)k$.
Let $X\subset V(G)\setminus V(P)$ be the set of vertices outside $P$ which have
at least $(1-10\varepsilon)|P|$ neighbors in $V(P)$. We consider
two cases depending on the size of $X$.

\smallskip{}

\medskip \noindent \textbf{Case 1. $|X| \ge 2\varepsilon k$.}

\smallskip{}

Redefine $X$ as an arbitrary subset of itself of size exactly $2\varepsilon|P|\le2\varepsilon k$.
Partition the path $P$ into $\frac{1}{2\varepsilon}$ intervals
$P_{1},\cdots,P_{1/2\varepsilon}$, each of length $2\varepsilon|P|$.
By the averaging argument, one can see that there exists an interval
$P_{i}$ for which $e(X,P_{i})\ge(1-10\varepsilon)|X||P_{i}|$. Consider
the bipartite graph $\Gamma$ induced by the two parts $X$ and $P_{i}$,
and note that the number of non-adjacent pairs is at most $10\varepsilon|X||P_{i}|$
(also note that $|X|=|P_{i}|$). Repeatedly remove vertices from $\Gamma$
which have degree at most $(1-8\varepsilon^{1/2})|X|$.
As long as the total number of removed vertices is at most $4\varepsilon^{1/2}|X|$,
each such deletion accounts for at least $4\varepsilon^{1/2}|X|$ non-adjacent
pairs of $\Gamma$.
Thus if we continued the removal for at least $4\varepsilon^{1/2}|X|$ steps, then
by counting the number of non-adjacent pairs in $\Gamma$ in two ways, we have
$4\varepsilon^{1/2}|X| \cdot 4\varepsilon^{1/2}|X| \le10\varepsilon|X||P_{i}|$, which is a contradiction. Hence, the deletion process stops at some step, and we
obtain a subgraph $\Gamma_1$ of $\Gamma$ of minimum degree
at least $(1-8\varepsilon^{1/2})|X|$.

Let $P_{i,0}$ (and $P_{i,1}$) be the leftmost (and rightmost) $9\varepsilon^{1/2}|P_{i}|$
vertices of the interval $P_{i}$. Even after removing the vertices
$P_{i,0},P_{i,1}$ from $\Gamma_{1}$, we are left with a graph $\Gamma_{2}$
of minimum degree at least
\[
(1-8\varepsilon^{1/2})|X|-18\varepsilon^{1/2}|P_{i}|=(1-26\varepsilon^{1/2})|X|\ge \frac{9}{10}|X|.
\]
By Theorem \ref{thm:linearpath_strong} (ii), since $\Gamma_{2}$
is a bipartite graph, in $(\Gamma_{2})_{p_{2}}\subset G_{p_{2}}$,
we can find a path of length at least $2(\frac{9}{10}-o(1))|X| \ge \frac{5}{3}|X|$. By removing
at most two vertices, we may assume that the two endpoints $x$ and
$y$ of this path are both in $X$. Since $\Gamma_{1}$ has minimum
degree at least $(1-8\varepsilon^{1/2})|P_{i}|$, both of these endpoints
have at least $\varepsilon^{1/2}|P_{i}|\ge \varepsilon^{3/2}k$ neighbors
in the sets $P_{i,0}$ and $P_{i,1}$. By Chernoff's inequality, in
the graph $(\Gamma_{1})_{p_{3}}\subset G_{p_{3}}$, we a.a.s. can
find edges of the form $\{x,x_{0}\}$ and $\{y,y_{1}\}$ for $x_{0}\in P_{i,0}$
and $y_{1}\in P_{i,1}$. Thus in the graph $G_{p_{2}}\cup G_{p_{3}}$,
we found a path of length at least $\frac{5}{3}|X|$ which starts
at $x_{0}$, ends at $y_{1}$, and uses only vertices from $X\cup(P_i\setminus(P_{i,0}\cup P_{i,1}))$
as internal vertices. Together with the path $P$, this gives a path
of length at least
\[
|P|-|P_{i}|+\frac{5}{3}|X|\ge|P|+\frac{2}{3}|X|=(1+\frac{1}{3}\varepsilon)k,
\]
 in $G_{p_{1}}\cup G_{p_{2}}\cup G_{p_{3}}$. \smallskip{}

\noindent \textbf{Case 2. $|X| < 2\varepsilon k$.}

\smallskip{}

Let $P=(v_{0},v_{1},\cdots,v_{\ell})$. Let $A_{0}=X\cup V(P)$ and
$B_{0}=V\setminus A_{0}$. Note that $|A_{0}|<(1+\varepsilon)k$,
and $G[B_{0}]$ has minimum degree at least $10\varepsilon|P|-|X|\ge7\varepsilon k$.

If $v_{\ell}$ has at least $\frac{\varepsilon k}{2}$ neighbors in
$B_{0}$ in $G$, then by Chernoff's inequality, in $G_{p_{2}}$,
we a.a.s. can find an edge $\{v_{\ell},w\}$ for some $w\in B_0$. Afterwards,
by Theorem \ref{thm:linearpath_strong} (ii), we a.a.s.~can find a path of
length at least $5\varepsilon k$ in $G_{p_3}[B_{0}]$ starting
at $w$. Together with $P$, these will form a path of length $\ell+1+5\varepsilon k\ge(1+4\varepsilon)k$.
Thus we may assume that $v_{\ell}$ has at least $(1-\frac{\varepsilon}{2})k$
neighbors in $A_{0}$.

Let $Y\subset A_{0}$ be the set of vertices which have at most $(1-10\varepsilon)|P|$
neighbors in $A_{0}$ in $G$. Note that the vertices in $Y$ have
at least $k - (1 - 10\varepsilon)|P| \ge 10\varepsilon k$ neighbors in $B_{0}$. Moreover, by the
definition of the set $X$, all vertices of $Y$ belong to $V(P)$.
Suppose that $|Y|\ge2\varepsilon k$. Then since $|A_{0}|\le(1+\varepsilon)k$,
there are at least
\[ \left(1-\frac{\varepsilon}{2}\right)k - (|A_0| - |Y|) \ge \frac{\varepsilon}{2}k \]
edges in $G$ of the form $\{v_{\ell},v_{i-1}\}$
where $v_{i}\in Y$, and a.a.s.~in $G_{p_{2}}$ we can find one such
edge $\{v_{\ell},v_{i-1}\}$. Afterwards, since $v_{i}$ has at least
$10\varepsilon k$ neighbors in $B_{0}$ in $G$, we a.a.s.~can
find an edge $\{v_{i},w\}$ in $G_{p_2}$
for some $w\in B_{0}$. By Theorem \ref{thm:linearpath_strong} (iii),
there a.a.s. exists a path $P'$ of length at least $5\varepsilon k$
starting at $w$ in $G_{p_{3}}$. The paths $P$ and $P'$ together
with the edges $\{v_{\ell},v_{i-1}\}$ and $\{v_{i},w\}$ will give
a path of length at least $(1+4\varepsilon)k$.

If $|Y|<2\varepsilon k$, then let $A_{1}=A_{0}\setminus Y$ and let
$B_{1}=V\setminus A_{1}$. Note that $|A_{1}|\ge(1-3\varepsilon)k$,
and $G[A_{1}]$ has minimum degree at least $(1-10\varepsilon)|P| - 2\varepsilon k \ge (1-13\varepsilon)k$.
If $k+1\le|A_{1}|<(1+\varepsilon)k$, then we use Theorem \ref{thm:rotext}
to find a.a.s. a path of length $k$ inside $A_1$ in $G_{p_4}$.
Finally, if $|A_{1}|\le k$,
then since $G$ has minimum degree at least $k$, we have $e(A_{1},B_{1})\ge|A_{1}|(k+1-|A_{1}|)\ge k$.
Thus in $G_{p_{2}}$, we can find a.a.s.~an edge $\{v,w\}$ such that $v\in A_{1}$
and $w\in B_{1}$. If $w\in B_{0}$, then Theorem \ref{thm:rotext} a.a.s.~gives
a path of length at least $|A_1| -1 \ge (1-3\varepsilon)k - 1 \ge (1-4\varepsilon)k$ in $G[A_1]_{p_4}$ starting at $v$, and Theorem \ref{thm:linearpath_strong} (iii)
a.a.s. gives a path of length at least
$6\varepsilon k$ in $G[B_{0}]_{p_{3}}$ starting at $w$. These two
paths together with the edge $\{v,w\}$
will give a path of length at least $(1+2\varepsilon)k$. Finally,
if $w\in B_{1}\setminus B_{0}$, then $w$ contains at least
$10\varepsilon k-|Y|\ge8\varepsilon k$
neighbors in the set $B_0$. Therefore in $G_{p_{2}}$, we a.a.s.~can find
an edge $\{w,w'\}$ such that $w'\in B_{0}$. Afterwards, we can proceed
as in the previous case to finish the proof.
\end{proof}

We conclude the section with the proof of Theorem \ref{thm:rotext}.

\begin{proof}[Proof of Theorem \ref{thm:rotext}]
Let $\varepsilon_0 \le \frac{1}{20}$ be a small enough constant,
and let $s = \frac{k}{(\log k)^{3/4}}$. Let $p=\frac{(1+3\varepsilon)\log k}{k}$
and $p_{1}=\cdots=p_{s}=\frac{(\log k)^{5/4}}{k^{2}}$.
We will prove that $G_{p}\cup G_{p_{1}}\cup\cdots\cup G_{p_{s}}$
is a.a.s. Hamiltonian. Since
\[ p_{1}+\cdots+p_{s}=s\cdot\frac{(\log k)^{5/4}}{k^{2}}=\frac{(\log k)^{1/2}}{k}, \]
this will imply that $G_{(1+4\varepsilon)\log k/k}$ a.a.s. contains
a Hamilton cycle.

We first claim that $G_{p}$ a.a.s. satisfies the following properties.
\begin{itemize}
  \setlength{\itemsep}{1pt} \setlength{\parskip}{0pt}
  \setlength{\parsep}{0pt}
\item[1.] for every subset $A$ of vertices of size $|A|\le\frac{n}{(\log k)^{3/2}}$,
we have $|N_{G_{p}}(A)|\ge\varepsilon^3|A|\cdot\log k$,
\item[2.] for every subset $A$ of vertices of size $|A|\ge\frac{n}{(\log k)^{3/4}}$,
we have $|N_{G_{p}}(A)|\ge(1-4\varepsilon)k$, and
\item[3.] $G_{p}$ is connected.
\end{itemize}
We prove these claims through proving that $G_{p}$ a.a.s. has
the following properties:
\begin{itemize}
  \setlength{\itemsep}{1pt} \setlength{\parskip}{0pt}
  \setlength{\parsep}{0pt}
\item[(a)] minimum degree is at least $\varepsilon^2\log k$,
\item[(b)] for all pairs of sets $A$ and $B$ of sizes $|A|\le\frac{n}{(\log k)^{3/2}}$ and $|B|\le\varepsilon^{3}|A|\cdot\log k$, have $e_{G_{p}}(A,B)<\frac{\varepsilon^2}{2}|A|\cdot\log k$, and
\item[(c)] for all pairs of sets $A$ and $B$ of sizes $|A|\ge\frac{n}{(\log k)^{3/4}}$ and $|B|\ge 3\varepsilon k$, have $e_{G_p}(A,B)>0$.
\end{itemize}
For a fixed vertex $v$, the probability $v$
has degree less than $\varepsilon^2\log k$ in the graph $G_{p}$ is
\begin{eqnarray*}
\sum_{i=0}^{\varepsilon^2\log k}{\deg_{G}(v) \choose i}p^{i}(1-p)^{\deg_{G}(v)-i} & \le & \sum_{i=0}^{\varepsilon^2\log k}{(1-\varepsilon)k \choose i}p^{i}(1-p)^{(1-\varepsilon)k-i}\\
 & \le & \sum_{i=0}^{\varepsilon^2\log k}\Big(\frac{e(1-\varepsilon)k}{i}\cdot\frac{p}{1-p}\Big)^{i}\cdot(1-p)^{(1-\varepsilon)k}\\
 & \le & \sum_{i=0}^{\varepsilon^2\log k}\Big(\frac{e(1+3\varepsilon)\log k}{i}\Big)^{i}\cdot e^{-(1+\varepsilon)\log k}\\
 & \le & (\varepsilon^2\log k+1)\cdot\Big(\frac{e(1+3\varepsilon)}{\varepsilon^2}\Big)^{\varepsilon^2\log k}e^{-(1+\varepsilon)\log k},
\end{eqnarray*}
which is $o(k^{-1})$ given that $\varepsilon$ is small enough. By
taking the union bound over all $n \le (1+\varepsilon)k$ vertices, we can deduce (a).
For (b), let $A$ be a set of size $t\le\frac{n}{(\log k)^{3/2}}$
and let $B$ be a set of size $\varepsilon^{3}t\log k$.
If $e_{G_{p}}(A,B)\ge\frac{\varepsilon^2}{2}t\log k$
and $A$ and $B$ are not disjoint, then let $A'=A\setminus B$, $B'=B\setminus A$,
and add the vertices in $A\cap B$, independently and uniformly at
random to $A'$ or $B'$. By linearity of expectation, we have 
\[ \mathbb{E}[e_{G_p}(A',B')] \ge  \frac{1}{4}e_{G_p}(A,B)\ge\frac{\varepsilon^2}{8}t\log k. \]
Hence there exists a choice of disjoint sets $A'\subset A$
and $B'\subset N(A)$ such that $e_{G_p}(A',B')\ge \frac{\varepsilon^2}{8}t\log k$.
Therefore it suffices to show that for every pair of disjoint sets $A$ and $B$
satisfying the bound on the sizes given in (b), we have $e_{G_{p}}(A,B)<\frac{\varepsilon^2}{8}t\log k$.
The probability that $e_{G_{p}}(A,B)\ge\frac{\varepsilon^2}{8}t\log k$
is at most
\[
{|A||B| \choose \varepsilon^2 t\log k/8}\cdot p^{\varepsilon^2 t\log k/8}
\le \left( \frac{e\varepsilon^3t^2 \log k}{\varepsilon^2 t \log k /8} \right)^{\varepsilon^2 t\log k/8} \cdot p^{\varepsilon^2 t\log k/8}
= \Big(8e\varepsilon tp\Big)^{\varepsilon^2 t\log k/8}.
\]
By taking the union bound over all possible sets, we see that
the probability of having a pair of sets violating (b) is at most
\begin{eqnarray*}
\sum_{t=1}^{n/(\log k)^{3/2}}{n \choose \varepsilon^{3}t\log k}^{2}\cdot\Big(8e\varepsilon tp\Big)^{\varepsilon^2 t\log k/8} 
	\le \sum_{t=1}^{n/(\log k)^{3/2}}\Big(\Big(\frac{en}{\varepsilon^{3}t\log k}\Big)^{16\varepsilon}\cdot\frac{16e\varepsilon t\log k}{k} \Big)^{\varepsilon^2 t\log k/8}.
\end{eqnarray*}
By considering all the variables other than $t$ as constant, the
logarithm of the summand on the right hand side can be expressed as
$at\log t+bt$ for some reals $a>0$ and $b$ (given that $8\varepsilon < 1$). Since the second derivative
of this function is positive, the maximum of the summand occurs either
at $t=1$ or $t=n/(\log k)^{3/2}$. From this, one can deduce that the
summand is always $o(n^{-1})$ and the sum is $o(1)$.
For (c), first consider a fixed pair of sets $A$ and $B$
of sizes $|A|\ge\frac{n}{(\log k)^{3/4}}$ and $|B|\ge3\varepsilon k$.
Since the number of vertices
of the graph is $n\le(1+\varepsilon)k$ and minimum degree is at
least $(1-\varepsilon)k$, we have $e_{G}(A,B)\ge\frac{1}{2}|A|\cdot\varepsilon k$.
Therefore, $\BBE[e_{G_p}(A,B)]\ge\frac{\varepsilon}{2}n(\log k)^{1/4}$, and
by Chernoff's inequality, the probability that $e_{G_p}(A,B)=0$ is
at most $e^{-\Omega(\varepsilon n(\log k)^{1/4})}$. Since the number of pairs
of sets $(A,B)$ is at most $2^{2n}$, we can take the union bound over
all choices of $A$ and $B$ to see that (c) hold.

Condition on the event that (a), (b), and (c) holds. Then for a set
$A$ of size at most $|A|\le\frac{n}{(\log k)^{3/2}}$, note that
by (a) we have $e_{G_p}(A,N_{G_p}(A))\ge |A|\varepsilon^2\log k$. Then
by (b), we have $|N_{G_p}(A)|>\varepsilon^{3}|A|\log k$. Therefore we have
Property 1. For Property 2, let $A$ be a set of size at least $\frac{n}{(\log k)^{3/4}}$.
If $|N_{G_p}(A)|<(1-4\varepsilon)k$, then there are no edges between $A$
and $B=V\setminus N_{G_p}(A)$ (recall that $A$ and $N_{G_p}(A)$ are
not necessarily disjoint), where $|B|\ge n-(1-4\varepsilon)k\ge3\varepsilon k$.
This contradicts (c) and cannot happen. Thus we have Property 2 as
well. Property 3 follows from Properties 1 and 2, since they imply that
all connected components are of order at least $(1-4\varepsilon)k > \frac{1}{2}n$.

Condition on the event that $G_{p}$ satisfies Properties 1, 2, and
3 given above. We claim that $G_{p}$ contains a path of length at
least $n-\frac{k}{(\log k)^{3/4}}$, and for all $i\le\frac{k}{(\log k)^{3/4}}$,
conditioned on the event that a longest path in $G_{p}\cup G_{p_1}\cup\cdots\cup G_{p_{i-1}}$ is of length $\ell_i$,
$G_{p}\cup G_{p_{1}}\cup\cdots\cup G_{p_{i}}$ contains a cycle of
length $\ell_i+1$ with probability at least $1-o(k^{-1})$.
Since $G_{p}$ is connected, this will imply
that as long as the graph does not contain a Hamilton path,
the length of a longest path increases by at least one in every
round of sprinkling. Since we start with a cycle of length at least
$n-\frac{k}{(\log k)^{3/4}}$, this will prove that the final graph
is a.a.s. Hamiltonian.

Let $P=(v_{0},\cdots,v_{\ell_i})$ be a longest path in the graph
$G_{i}=G_{p}\cup G_{p_{1}}\cup\cdots\cup G_{p_{i-1}}$ for some $i \ge 1$.
For a set $X=\{v_{a_{1}},\cdots,v_{a_{k}}\}$, we let $X^{-}=\{v_{a_{1}-1},\cdots,v_{a_{k}-1}\}$
and $X^{+}=\{v_{a_{1}+1},\cdots,v_{a_{k}+1}\}$ (if the index becomes
either $0$ or $\ell_i+1$, then we remove the corresponding vertex
from the set). Note that for all sets $X$, we have $X \cup X^+ \cup X^- \subset V(P)$.
Let $X_{0}=\{v_{0}\}$. 

We will iteratively construct
sets $X_{t}$ for $t = 0,1,\cdots$ of size $|X_{t}|\ge(\frac{\varepsilon^3\log k}{4})^{t}$,
as long as $|X_{t}|\le\frac{n}{(\log n)^{3/2}}$, such that $X_{t}\supset X_{t-1}$,
and for every vertex $v\in X_{t}$, there exists a path of length
$\ell_i$ over the vertex set $V(P)$ which starts at $v$ and ends in $v_{\ell_i}$.
Given a set $X_{t-1}$, if $N_{G_{i}}(X_{t-1})\not\subset V(P)$,
then we can find a path of length at least $\ell_i+1$ in $G_{i}$,
which contradicts the assumption on maximality of $P$. Therefore,
$N_{G_{i}}(X_{t-1})\subset V(P)$, and each vertex in $N_{G_{i}}(X_{t-1})\setminus(X_{t-1}\cup X_{t-1}^{-}\cup X_{t-1}^{+})$
gives rise to an `endpoint' from which there exists a path of length
$\ell_i$, and at most two such vertices can give rise to the same endpoint
(see the discussion before the statement of Theorem \ref{thm:rotext}).
Let $X_{t}$ be the union of $X_{t-1}$ with the set of endpoints
obtained as above. We have,
\begin{eqnarray*}
|X_{t}| & \ge & |X_{t-1}|+\frac{1}{2}|N_{G_{i}}(X_{t-1})\setminus(X_{t-1}\cup X_{t-1}^{-}\cup X_{t-1}^{+})|\\
 & \ge & \frac{1}{2}|N_{G_{i}}(X_{t-1})|-\frac{1}{2}|X_{t-1}|\ge\frac{1}{2}|X_{t-1}|\cdot(\varepsilon^3\log k-1)\ge\Big(\frac{\varepsilon^3\log k}{4}\Big)^{t}
\end{eqnarray*}
(where we used Property 1 in the second to last inequality). Repeat
the argument until the first time we reach a set of size $|X_{t}|>\frac{n}{(\log k)^{3/2}}$,
and redefine $X_{t}$ as a subset of size exactly $\frac{n}{(\log k)^{3/2}}$
which contains $X_{t-1}$. By repeating the argument above, we can
find a set $X_{t+1}$ of size at least $\frac{\varepsilon^3\log k}{4}\cdot\frac{n}{(\log k)^{3/2}}>\frac{n}{(\log k)^{3/4}}$.
By repeating the argument once more (now using Property 2 instead
of Property 1), we can find a set $X_{t+2}$ of size at least $\frac{1}{2}(1-5\varepsilon)k$.

For each vertex $v\in X_{t+2}$, there exists a path of length $\ell_i$
which starts at $v$ and uses only vertices from $V(P)$. Thus if
there exists an edge between $X_{t+2}$ and $V\setminus V(P)$ in
$G_{i}$, then we can find a path of length at least $\ell_i+1$ and
contradict maximality of $P$. If $|V\setminus V(P)|\ge\frac{n}{(\log k)^{3/4}}$,
then Property 2 implies the existence of such edge
(since $\frac{1}{2}(1-5\varepsilon)k + (1-4\varepsilon)k > n$). This shows that
we have $\ell_i\ge n-\frac{n}{(\log k)^{3/4}}$ for all $i \ge 1$. In particular,
$G_p$ contains a path of length at least $n-\frac{n}{(\log k)^{3/4}}$.
 For each vertex $v\in X_{r+2}$,
by applying the argument of the previous paragraph to the other endpoint
of the path starting at $v$, we can find a set $Y_{v}$ of size at
least $\frac{1}{2}(1-5\varepsilon)k$ such that for every $w\in Y_{v}$,
there exists a path of length $\ell_i$ which starts at $v$ and ends
at $w$. Since $n \le (1+\varepsilon)k$ and the minimum degree of $G$ is
at least $(1-\varepsilon)k$, there are at least
$\frac{1}{2}\cdot\Big(\frac{1}{2}(1-5\varepsilon)k - 2\varepsilon k\Big)^{2}\ge\frac{1}{16}k^{2}$
pairs such that if some pair appears in $G_{p_{i}}$, then $G_{p_{i}}$
contains a cycle of length $\ell_i+1$. Since $p_{i}=\frac{(\log k)^{5/4}}{k^{2}}$,
by Chernoff's inequality, the probability that such edge appears in
$G_{p_{i}}$ is at least $1-e^{-\Omega((\log k)^{5/4})}=1-o(k^{-1})$.
This proves our claim.\end{proof}

\section{Cycles of length $(1-o(1))k$\label{sec:cycles}}

In this section we prove Theorem \ref{thm:linearcycle}.

\subsection{High connectivity and long cycles}

We start with a simple lemma based on the DFS algorithm which allows
us to claim the a.a.s. existence of a cycle of length linear in
the average degree of the graph.
\begin{lem}
\label{lem:DFS_cycle}Let $\alpha$ be a fixed positive real. Let
$G$ be a graph of average degree $\alpha k$, and let $p=\frac{\omega}{k}$
for some function $\omega=\omega(k)\ll k$ that tends to infinity as $k$
does. Then $G_{p}$ a.a.s.~contains a cycle of length at least $(\frac{1}{2}-\frac{5}{\sqrt{\omega}})\alpha k$.
Moreover, if $G$ is a bipartite graph, then $G_{p}$ a.a.s.~contains
a cycle of length at least $(1-\frac{10}{\sqrt{\omega}})\alpha k$.\end{lem}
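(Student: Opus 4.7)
The plan is to combine the DFS-based path construction of Theorem \ref{thm:linearpath_strong} with an independent sprinkling round that supplies a chord closing the path into a cycle of essentially the same length.

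First, I would pass to a subgraph $G' \subseteq G$ of minimum degree at least $k' := \alpha k / 2$, using the standard fact that every graph of average degree $d$ contains a subgraph of minimum degree $\geq d/2$ (obtained by iteratively removing low-degree vertices); if $G$ is bipartite then so is $G'$. Next I would split $p = p_1 + p_2$ with both $p_1, p_2$ of order $\omega/k$ (say $p_1 = p_2 = p/2$), so that $G_p$ is distributionally equivalent to the independent union $G'_{p_1} \cup G'_{p_2}$. Applying Theorem \ref{thm:linearpath_strong}(i) --- or (ii) in the bipartite case --- to $G'_{p_1}$, the DFS-based argument there shows that a.a.s., at the first moment $|S \cup U| = (1 - \varepsilon) k'$ for some $\varepsilon = O(1/\sqrt{\omega})$, the DFS stack $U$ forms a path $P = v_0 v_1 \cdots v_\ell$ of length $\ell \geq (1 - O(1/\sqrt{\omega})) k'$ (resp.\ $(2 - O(1/\sqrt{\omega})) k'$) in $G'_{p_1}$. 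Up to constants in the error term these match the target cycle lengths $(1/2 - 5/\sqrt{\omega})\alpha k$ and $(1 - 10/\sqrt{\omega})\alpha k$ respectively, leaving a little slack for the closing step.

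To convert $P$ into a cycle of comparable length I would exploit the independence of $G'_{p_2}$ from the DFS transcript: for every pair $\{v_i, v_j\} \in E(G')$ with $|i - j| \geq 2$, the edge $\{v_i, v_j\}$ is present in $G'_{p_2}$ independently with probability $p_2 = \Theta(\omega/k)$. A chord of this form with $\min(i, \ell - j) = O(k'/\sqrt{\omega})$ closes $P$ into a cycle of length $(1 - O(1/\sqrt{\omega})) \ell$, which is exactly the required bound. At the DFS stopping moment the top vertex $v_\ell$ has at least $k'$ $G'$-neighbors, of which at most $|S| \leq \varepsilon k'$ lie in $S$; I plan to show that a constant fraction of the remaining neighbors lie on the early portion $\{v_0, \ldots, v_{O(\varepsilon k')}\}$ of $U$, producing $\Omega(k')$ candidate chord-edges in $G'$. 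Each such candidate falls into $G'_{p_2}$ independently with probability $p_2$, and since $k' p_2 = \Theta(\omega) \to \infty$, Chernoff's inequality will give a.a.s.\ at least one such chord, which closes $P$ into a cycle of the desired length.

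The main obstacle is justifying the counting claim in the previous paragraph. In the worst case, $v_\ell$'s $G'$-neighbors could be concentrated in the unvisited set $T$ or near the top of $U$, leaving very few chord candidates on the early portion of $P$. To handle this I would extend the DFS past the first stopping moment and follow $v_\ell$'s further exploration: either many of $v_\ell$'s subsequent $T$-queries come back positive --- in which case $U$ grows and the analysis simply restarts at a later, longer-path moment --- or $v_\ell$'s $T$-queries remain negative in $G'_{p_1}$ until $v_\ell$ is eventually popped, which forces essentially all of $v_\ell$'s $G'$-neighbors to lie on the current stack. Combined with a symmetric argument run from the bottom endpoint $v_0$, this should produce the $\Omega(k')$ candidates needed. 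Carefully balancing the slight lengthening of $P$ during this extension against the budget for the error term is what drives the precise $5/\sqrt{\omega}$ and $10/\sqrt{\omega}$ constants in the conclusion.
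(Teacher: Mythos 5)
The approach you take is genuinely different from the paper's, and it has a real gap at exactly the point you flag as ``the main obstacle.'' You want the top-of-stack vertex $v_\ell$ at the DFS stopping moment to have $\Omega(k')$ neighbors in the early segment $\{v_0,\dots,v_{O(\varepsilon k')}\}$ of the stack. But when $v_\ell$ is first pushed onto the stack, none of its own edges have been queried yet, so its $G'$-neighbors can lie anywhere in $U\cup T\cup S$ --- in particular, arbitrarily many can lie in $T$, or can be scattered near the top of $U$. Your fallback (``either $v_\ell$'s $T$-queries come back positive and $U$ grows, so restart; or $v_\ell$ is popped and its neighbors must lie on the stack'') does not close the gap. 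Even after $v_\ell$ is popped with all $T$-queries negative, the stack neighbors of $v_\ell$ need not be concentrated near $v_0$; they could cluster near the top of the stack, in which case every potential chord from $v_\ell$ is short. Nor do you control how many of $v_\ell$'s neighbors are exhausted in $T$ by negative answers; a positive fraction of $v_\ell$'s degree could plausibly go to $T$ before popping, at which point only a modest number of chord candidates survive, and those need not reach far back. The restarting process also needs a termination argument, since the DFS can keep going deeper without any particular vertex ever having the desired profile.

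The paper avoids these issues entirely by not committing to a particular endpoint and not stopping the DFS early. It runs the DFS on $G_{p_1}$ to completion (building the spanning forest), observes that every edge of $G$ whose two endpoints end up incomparable in the DFS tree order must have been queried negatively, and bounds the total number of negative answers by the total query length (at most $\frac{2n}{p_1}$, by Chernoff). Since $G$ has $\frac{\alpha n k}{2}$ edges, at least $(\frac{1}{2}-\frac{4}{\alpha\omega})n\alpha k$ of them are comparable, so by averaging some vertex $v$ has at least $(\frac{1}{2}-\frac{4}{\alpha\omega})\alpha k$ edges to its tree ancestors. The tree path from $v$ to its root is then automatically long and lives in $G_{p_1}$, and sprinkling $G_{p_2}$ a.a.s.\ hits one of the $\frac{\alpha k}{\sqrt{\omega}}$ farthest-reaching such chord edges, closing the long cycle. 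The bipartite refinement comes for free because consecutive ancestors of $v$ alternate sides. This global counting/averaging step --- showing that \emph{some} vertex (not necessarily the path endpoint) has many ancestor-edges --- is precisely the idea your proposal is missing, and I do not see how to recover it locally from the path endpoint without, in effect, rederiving the same global argument. Also note, as a smaller point, that passing to the minimum-degree-$\alpha k/2$ subgraph and then applying Theorem \ref{thm:linearpath_strong} only gives an error term on the order of $1/\sqrt{\alpha\omega}$ rather than $1/\sqrt{\omega}$, so the claimed constants $5/\sqrt{\omega}$ and $10/\sqrt{\omega}$ would fail for small fixed $\alpha$ even if the chord-finding step worked.
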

\begin{proof}
Let $n$ be the number of vertices of $G$ and set $p_{1}=p_{2}=\frac{\omega}{2k}\ll 1$.
We will show that $G_{p_{1}}\cup G_{p_{2}}$
a.a.s. contains a cycle of length at least $(\frac{1}{2}-\frac{5}{\sqrt{\omega}})\alpha k$.

Consider the DFS algorithm applied to the graph $G_{p_{1}}$ starting
from an arbitrary vertex. By Chernoff's
inequality, after examining the query sequence for $\frac{2n}{p_{1}}$
steps, we a.a.s. receive at least $n$ positive answers. Condition
on this event. At the time at which $T$ becomes an empty set (therefore
when we complete exploring the component structure), we know that since $|S\cup U|=n$,
the length of the query sequence is at most $\frac{2n}{p_{1}}$. The
rooted spanning forest we found induces a partial order on the vertices
of the graph, where for two vertices $x,y$, we have $x<y$ if and
only if $x$ is a predecessor of $y$ in one of the rooted trees in
the spanning forest.

In the DFS algorithm, every edge $\{x,y\}$ of $G$ for which $x$
and $y$ are incomparable in the partial order, must have been queried
and answered negatively. Therefore, there can only be at most $\frac{2n}{p_1}$
such edges. Since the average degree of the graph is $\alpha k$,
this means that there are at least $\frac{\alpha nk}{2}-\frac{2n}{p_1}\ge(\frac{1}{2}-\frac{4}{\alpha\omega})n\cdot\alpha k$
edges $\{x,y\}$ of $G$ for which $x<y$ or $y<x$. Hence, there
exists a vertex $v$ incident to at least $(\frac{1}{2}-\frac{4}{\alpha\omega})\alpha k$
edges $\{w,v\}$ of $G$ for which $w<v$.
By definition, the other endpoint of all these edges
lie on the path from $v$ to the root of the tree that $v$ belongs to.
The probability that one of the edges among the $\frac{\alpha k}{\sqrt{\omega}}$ 
farthest reaching edges appear is 
\[ 1 - (1-p)^{\alpha k / \sqrt{\omega}} \ge 1 - e^{\alpha \sqrt{\omega}} = 1-o(1). \]
Thus there a.a.s.~exists at least one edge among the farthest
reaching $\frac{\alpha k}{\sqrt{\omega}}$ edges. This edge gives a
cycle of length at least $(\frac{1}{2}-\frac{5}{\sqrt{\omega}})\alpha k$.

Moreover, if $G$ is a bipartite graph, then this edge gives a cycle
of length $(1-\frac{10}{\sqrt{\omega}})\alpha k$, since the vertex
$v$ can only be adjacent to every other vertex in the path from $v$
to the root of the tree that contains $v$.
\end{proof}
Let $t$ be a positive integer. A graph $G$ is \emph{$t$-vertex-connected}
(or \emph{$t$-connected} in short) if for every set $S$ of at most
$t-1$ vertices, the graph $G\setminus S$ is connected. Here we state
some facts about highly-connected graphs without proof. The fourth
part is a result of Mader \cite{Mader}, and the fifth part is a result
of Menger \cite{Menger}. We refer readers to Diestel's graph theory
book \cite{Diestel} for more information on highly-connected graphs.

\begin{lem}
\label{lem:highconnectedgraph}Let $t$ be a positive integer, and let $G$, $G'$ be $t$-connected graphs.

\begin{itemize}
  \setlength{\itemsep}{1pt} \setlength{\parskip}{0pt}
  \setlength{\parsep}{0pt}
\item[(i)] $G$ remains connected even after removing a combination of $t-1$ edges and vertices.
\item[(ii)] If $v \notin V(G)$ has at least $t$ neighbors in $G$, then $G \cup \{v\}$ is also $t$-connected.
\item[(iii)] If $|V(G)\cap V(G')| \ge t$, then $G \cup G'$ is also $t$-connected.
\item[(iv)] Every graph of average degree at least $4t$ contains a $t$-connected subgraph.
\item[(v)] For every pair of subsets $A$ and $B$ of $V(G)$, there are 
$\min\{t, |A|, |B|\}$ vertex-disjoint paths in $G$ that connect $A$ and $B$.
\end{itemize}
\end{lem}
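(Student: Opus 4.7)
The plan is to dispatch the five items in turn, treating (i)--(iii) as direct consequences of the definition of $t$-connectivity and invoking Mader's and Menger's theorems for (iv) and (v). Since the paper cites \cite{Mader} and \cite{Menger} for these, I would simply quote them rather than reprove them.

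For (i), I would proceed by induction on $|F|$, where $F$ is the set of deleted edges and $S$ the set of deleted vertices, with $|F|+|S| \le t-1$. The base case $|F|=0$ is the definition of $t$-connectivity. For the inductive step, fix an edge $e=\{u,v\}\in F$ and apply the hypothesis to the pair $(F\setminus\{e\},\,S\cup\{u\})$, whose total size is still at most $t-1$; this yields that $G-S-F-\{u\}$ is connected. To reattach $u$, observe that $u$ has at least $t$ neighbors in $G$, so after deleting the vertices in $S$ and the edges in $F$ the vertex $u$ still has at least $t-(|S|+|F|-1)\ge 1$ surviving neighbors, which glues $u$ to the connected graph $G-S-F-\{u\}$. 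For (ii), consider any $R\subseteq V(G\cup\{v\})$ with $|R|\le t-1$: if $v\in R$, then $(G\cup\{v\})-R = G-(R\setminus\{v\})$ is connected because $|R\setminus\{v\}|\le t-2$; if $v\notin R$, then $G-R$ is connected by $t$-connectivity of $G$, and $v$ retains at least $t-|R|\ge 1$ neighbor there, so adding $v$ keeps the graph connected. For (iii), after removing any $R$ with $|R|\le t-1$ the graphs $G-R$ and $G'-R$ are individually connected, and since $|V(G)\cap V(G')|\ge t > |R|$ the intersection survives the deletion, giving a common vertex to fuse the two connected pieces into one.

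Part (iv) is Mader's theorem, and I would simply cite \cite{Mader}; part (v) is the set-to-set form of Menger's theorem, cited from \cite{Menger}. Both admit short derivations from standard material, but neither is in the spirit of the short combinatorial verifications above. The main obstacle, if one insisted on a self-contained treatment, would be (iv): the proofs of (i)--(iii) use only the definition of $t$-connectivity in a one-line case analysis, whereas Mader's theorem requires a genuinely non-trivial extremal argument tracking how the subgraph structure evolves as low-degree vertices are successively removed. In context, however, this is just a collection of preliminaries, so quoting the classical references is entirely in line with the paper's stated intent to state these facts without proof.
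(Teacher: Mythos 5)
The paper states this lemma entirely without proof, noting only that (iv) is Mader's theorem and (v) is Menger's theorem and referring the reader to Diestel; your citations for (iv) and (v) match, and your elementary verifications of (i)--(iii) correctly supply the routine arguments the paper leaves implicit. (One small arithmetic quibble in (i): the number of surviving neighbours of $u$ should be bounded below by $t-|S|-|F|$, not $t-(|S|+|F|-1)$, since every edge of $F$ incident to $u$---including $e$ itself---can cost a neighbour; but $t-|S|-|F|\ge 1$ still holds, so the conclusion is unaffected.)
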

The main strategy we use in proving Theorem \ref{thm:linearcycle}
is to find in the random subgraph a highly connected subgraph
that contains many vertex disjoint cycles.
Lemma \ref{lem:DFS_cycle} will be used to find
vertex disjoint cycles. Afterwards the connectivity condition will
allow us to `patch' the cycles into a long cycle of desired length.
This is similar in spirit to a theorem of Locke \cite{Locke} which
asserts that a 3-connected graph with a path of length
$\ell$ contains a cycle of length at least $\frac{2}{5}\ell$.

\begin{lem}
\label{lem:iterative_cycle}Let $\alpha$ be a fixed positive real,
$t$ be a fixed positive integer, and let $p=\frac{\omega}{k}$ for some
function $\omega=\omega(k)\ll k$ that tends to infinity as $k$ does.
Let $G_{1}$ and $G_{2}$ be graphs defined over the same set of $n$
vertices. Suppose that at least $(1-\frac{1}{t})n$ vertices of $G_{1}$
have degree at least $\alpha k$, and that $G_{2}$ is $t$-connected.
Then the graph $(G_{1})_{p}\cup G_{2}$ a.a.s.
contains a cycle of length at least $(1-\frac{10}{t})\alpha k$.
\end{lem}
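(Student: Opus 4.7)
Following the strategy indicated in the paragraph preceding the lemma, the plan is to extract from $(G_1)_p$ a family of vertex-disjoint cycles whose total length is close to $\alpha k$ via Lemma~\ref{lem:DFS_cycle}, and then to use the $t$-connectivity of $G_2$ to splice them into one long cycle. For the extraction, I would use sprinkling to split $p=p_1+p_2$, employing $(G_1)_{p_1}$ for the cycle extraction and reserving $(G_1)_{p_2}$ for an auxiliary probabilistic step if needed. Applying Lemma~\ref{lem:DFS_cycle} to $(G_1)_{p_1}$ yields a cycle $C_1$ of length at least $(\tfrac12-o(1))\alpha k$; iterating on $G_1\setminus V(C_1)$, then on $G_1\setminus(V(C_1)\cup V(C_2))$, and so on, produces vertex-disjoint cycles $C_1,\dots,C_s$ with total length at least $\bigl(1-\tfrac{5}{t}\bigr)\alpha k$. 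One has to check that after each removal the remaining graph still has average degree $(1-o(1))\alpha k$, which uses the fact that at most $n/t$ vertices of $G_1$ are low-degree and that only a bounded number of iterations are required.

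For the splicing, I would pick, on each $C_i$, an edge $\{u_i,v_i\}\in E(C_i)$, and seek vertex-disjoint paths $Q_1,\dots,Q_s$ in $G_2$, where $Q_i$ joins $v_i$ to $u_{i+1}$ (indices cyclic) and is internally disjoint from $\bigcup_j V(C_j)$. Given such a linkage, concatenating the Hamilton arcs $C_i\setminus\{u_iv_i\}$ with the $Q_i$ yields a single cycle of length at least $\sum_i |V(C_i)|\ge (1-\tfrac{10}{t})\alpha k$ in $(G_1)_p\cup G_2$, which is the desired conclusion.

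The main obstacle is the construction of this linkage, since $t$-connectivity alone does not imply $s$-linkage for unbounded $s$. The hope is that one can exploit the considerable freedom (each $C_i$ has $|V(C_i)|$ choices for the edge $\{u_i,v_i\}$, and the cyclic ordering of the cycles is free) together with repeated applications of Menger's theorem (Lemma~\ref{lem:highconnectedgraph}(v)) in $G_2$ to construct the $Q_i$ greedily. A delicate point is that the paths $Q_i$ must avoid the interiors of the other cycles $C_j$, which can potentially destroy connectivity inside $G_2$ when the cycles are long; bypassing this probably requires positioning the attachment points $u_i,v_i$ carefully, or routing the paths through a well-connected core of $G_2$ that survives the deletion. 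The factor $10/t$ in the conclusion reflects both the $1/t$ exceptional low-degree set in $G_1$ and the slack introduced by the patching.
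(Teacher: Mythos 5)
Your proposal has a genuine gap, which you yourself identify at the end: the linkage step. Asking for vertex-disjoint paths $Q_1,\dots,Q_s$ in $G_2$, each joining a prescribed pair of attachment points and each internally avoiding $\bigcup_j V(C_j)$, is a demand far stronger than $t$-connectivity can meet. Even without the avoidance constraint, finding disjoint paths joining $s$ prescribed pairs is the $s$-linkage problem, whose solvability requires connectivity that grows with $s$, not a fixed $t$. Worse, the internal-avoidance constraint asks the paths to live in $G_2\setminus\bigcup_j V(C_j)$, and $\bigcup_j V(C_j)$ has size of order $\alpha k$ (possibly comparable to $n$), so this deletion can disconnect $G_2$ outright. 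The stated hopes --- exploiting the freedom in choosing $\{u_i,v_i\}$, the cyclic order, and repeated applications of Menger --- are not backed by any quantitative argument, and nothing in Lemma~\ref{lem:highconnectedgraph} gives a route to such a linkage.

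The paper sidesteps this entirely by merging cycles one at a time rather than all at once. It maintains a single growing cycle $C$; each round, it sprinkles with probability $\omega/(tk)$, applies Lemma~\ref{lem:DFS_cycle} to $G_1[V\setminus V(C)]$ (whose average degree is shown to remain at least $\frac{9}{t}\alpha k$) to get a new cycle $C'$ of length at least roughly $\frac{4\alpha k}{t}$ vertex-disjoint from $C$, and then invokes Menger in $G_2$ to get $t$ vertex-disjoint $C$--$C'$ paths. Choosing only the two of these whose endpoints on $C$ are closest along $C$, one merges $C$ and $C'$ into a cycle of length at least $\bigl(1-\frac{1}{t}\bigr)|V(C)|+\frac{1}{2}|V(C')|$, a net gain of at least $\frac{\alpha k}{t}$ per round, so $t$ rounds suffice. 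Because each merge uses only two Menger paths between two specified sets, it never needs any linkage-type statement. That is precisely the structural simplification missing from your proposal, and it is the content of the lemma rather than a detail to be filled in.
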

\begin{proof}
Let $p_{1}=p_{2}=\cdots=p_{t}=\frac{\omega}{tk}$. Suppose that
we have found a cycle of length $\ell\le(1-\frac{10}{t})\alpha k$ after
$i-1$ rounds of sprinkling. We claim that we can then find a.a.s. a cycle
of length at least $(1-\frac{1}{t})\ell+\frac{2\alpha k}{t}$, by
sprinkling one more round with probability $p_i$.
If this is the case, then since
\[
\Big((1-\frac{1}{t})\ell+\frac{2\alpha k}{t}\Big)-\ell=\frac{2\alpha k}{t}-\frac{\ell}{t}\ge\Big(\frac{2}{t}-\frac{1}{t}\Big)\alpha k=\frac{1}{t}\alpha k,
\]
after sprinkling at most $t$ rounds, we will a.a.s. find a cycle of length
at least $(1-\frac{10}{t})\alpha k$.

To prove the claim, suppose that we are given a cycle $C$ of length
$\ell\le(1-\frac{10}{t})\alpha k$. Let $V'=V\setminus V(C)$. Since
$G_{1}$ has $(1-\frac{1}{t})n$ vertices of degree at least $\alpha k$,
the graph $G[V']$ has at least $(1-\frac{1}{t})n-\ell$ vertices
which have degree at least $\alpha k-\ell\ge\frac{10}{t}\alpha k$.
Therefore the average degree of $G[V']$ is at least
\[
\frac{\Big((1-\frac{1}{t})n-\ell\Big)\cdot\frac{10}{t}\alpha k}{n-\ell}=\Big(1-\frac{1}{t}-\frac{\ell}{t(n-\ell)}\Big)\cdot\frac{10}{t}\alpha k=\Big(1-\frac{1}{t}-\frac{1}{t(n/\ell-1)}\Big)\cdot\frac{10}{t}\alpha k,
\]
which is minimized when $\ell$ is maximized. Since $\ell\le(1-\frac{10}{t})\alpha k\le(1-\frac{10}{t})n$,
the average degree of $G[V']$ is at least
\[
\Big(1-\frac{1}{t}-\frac{(1-\frac{10}{t})n}{t\cdot\frac{10}{t}n}\Big)\cdot\frac{10}{t}\alpha k=\frac{9}{t}\alpha k.
\]

Thus by Lemma \ref{lem:DFS_cycle}, after sprinkling one more round,
we a.a.s.~can find a cycle $C'$ in $G[V']$ of length at least $(\frac{1}{2}-o(1))\frac{9\alpha k}{t}\ge\frac{4\alpha k}{t}$.
Since $G_{2}$ is a $t$-connected graph, there exist $t$ vertex
disjoint paths that connect $C$ to $C'$ (see Lemma \ref{lem:highconnectedgraph}
(v)). Among these paths, consider the two whose intersection point
with $C$ are closest to each other (along the distance
induced by $C$). Using these two paths to merge $C$ and $C'$, we
a.a.s.~can find a cycle of length at least
\[
\Big(1-\frac{1}{t}\Big)|V(C)|+\frac{1}{2}|V(C')|\ge\Big(1-\frac{1}{t}\Big)\ell+\frac{2\alpha k}{t},
\]
as claimed.
\end{proof}
Our next lemma is similar to the lemma above, but will be applied
under slightly different circumstances.
\begin{lem}
\label{lem:combine_cycle} Suppose that $\ell$ and $t$ are
given integers satisfying $\ell \ge t$.
Let $G$ be a $t$-vertex-connected graph
that contains $s$ vertex-disjoint cycles of lengths at least $\ell$ each.
Then $G$ contains a cycle of length at least \[ \left(1-\frac{s}{t}\right)^{s-1}\ell+\sum_{i=0}^{s-2}\left(1-\frac{s}{t}\right)^{i}\cdot\frac{\ell}{2}.\]

Thus if $t$ is large enough depending on $s$, then
$G$ contains a cycle of length at least $\frac{s}{2}\ell$.
\end{lem}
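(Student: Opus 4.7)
The plan is to iteratively merge the $s$ cycles into a single long cycle, in the same spirit as the proof of Lemma~\ref{lem:iterative_cycle}. Set $D_1 := C_1$ and, at each step $j = 1, 2, \ldots, s-1$, merge the current cycle $D_j$ (of length $L_j \ge \ell$) with one of the remaining input cycles (call it $C_{j+1}$ after possibly relabeling) to form a longer cycle $D_{j+1}$. The target recurrence is
\[
L_{j+1} \ge (1 - s/t)\, L_j + \ell/2;
\]
starting from $L_1 \ge \ell$ and iterating gives exactly the stated bound $L_s \ge (1-s/t)^{s-1}\ell + \sum_{i=0}^{s-2}(1-s/t)^i \cdot \ell/2$.

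To perform the $j$-th merge, I invoke Menger's theorem (Lemma~\ref{lem:highconnectedgraph}(v)): since $G$ is $t$-connected and $|D_j|, |C_{j+1}| \ge \ell \ge t$, there exist $t$ pairwise vertex-disjoint paths $\pi_1, \ldots, \pi_t$ between $D_j$ and $C_{j+1}$, each with internal vertices outside $D_j \cup C_{j+1}$. I aim to select two of them, $\pi_a$ and $\pi_b$, with $D_j$-endpoints $u_a, u_b$ close along $D_j$ and with internal vertices that do not ruin the remaining cycles. Then $D_{j+1}$ is formed by concatenating the longer arc of $D_j$ from $u_a$ to $u_b$, the path $\pi_b$, the longer arc of $C_{j+1}$, and $\pi_a$ reversed. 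The longer arc of $C_{j+1}$ has length at least $\ell/2$; if $u_a, u_b$ can be chosen within cyclic distance at most $sL_j/t$ along $D_j$, the longer arc of $D_j$ has length at least $(1 - s/t)L_j$, yielding the recurrence.

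The main obstacle is selecting the two paths $\pi_a, \pi_b$ so that (i) their $D_j$-endpoints are within cyclic distance at most $sL_j/t$, and (ii) they do not absorb too many vertices of the remaining cycles $C_{j+2}, \ldots, C_s$, so the induction can continue. The factor $s$ (rather than $1$) arises because one groups the $t$ paths according to which of the remaining cycles they primarily interact with --- either because they are clean of $C_{j+2}, \ldots, C_s$, or because they first touch some specific $C_{i^*}$ (in the latter case one truncates the paths at their first entry into $C_{i^*}$ and relabels $C_{i^*}$ as the cycle being merged this step) --- and applies pigeonhole within a group of size at least $t/s$. With this choice $\pi_a, \pi_b$ can be taken to be compatible with preserving the other cycles for subsequent merges. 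Once the recurrence is established, the concluding assertion follows by letting $t/s \to \infty$: $(1-s/t)^{s-1} \to 1$ and each $(1-s/t)^i \to 1$, so the bound tends to $\ell + (s-1)\ell/2 = (s+1)\ell/2 \ge (s/2)\ell$.
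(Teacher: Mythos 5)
Your proposal is correct and follows essentially the same approach as the paper: iteratively merge the current long cycle into one of the remaining cycles by taking $t$ Menger paths, pigeonholing by which remaining cycle is reached, and choosing two paths whose attachment points on the current cycle are closest. The only cosmetic difference is that the paper applies Menger directly between the current cycle and the \emph{union} of all remaining cycles (so the paths' interiors automatically avoid every remaining cycle), whereas you apply it to a single target cycle and then truncate paths at their first entry into another remaining cycle --- this achieves the same effect with a bit more bookkeeping.
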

\begin{proof}
We will find a cycle of desired length by an iterative
process; for $h=1,2,\cdots,s$, after the $h$-th step,
we find a cycle of length at least $\ell_{h}=(1-\frac{s}{t})^{h-1}\ell+\sum_{i=0}^{h-2}\left(1-\frac{s}{t}\right)^{i}\cdot\frac{\ell}{2}$,
and $s-h$ other cycles of length at least $\ell$ each which are all vertex-disjoint.
Note that the statement is vacuously true for $h=1$ by the given
condition. The statement for $h=s$ corresponds to the statment
of the lemma.

Given a cycle $C_{h}$ of length at least $\ell_{h}$ and $s-h$ other vertex
disjoint cycles of length at least $\ell$ each, let $X = V(C_h)$ and
$Y$ be the union of the set of vertices of the cycles of length at least $\ell$.
By the $t$-connectivity of our graph, Lemma \ref{lem:highconnectedgraph} (v), and
the fact 
\[ \min\{|X|, |Y|\} \ge \ell \ge t, \]
we see that there are $t$ vertex-disjoint paths that connect $X$ to $Y$. 
By the pigeonhole principle, at
least $\frac{t}{s-h}$ of these paths connect the cycle of length
at least $\ell_{h}$ to one fixed cycle of length at least $\ell$. Among these paths,
consider the two whose intersection points with $C_{h}$ are closest
to each other. Using these two paths, we can find a cycle of length
at least
\[
\Big(1-\frac{s-h}{t}\Big)\ell_{h}+\frac{\ell}{2}\ge\Big(1-\frac{s}{t}\Big)\ell_{h}+\frac{\ell}{2}=\ell_{h+1}.
\]
Moreover, note that we still have at least $s-h-1$ vertex-disjoint
cycles which are also disjoint to the new cycle we found. Therefore
in the end, after the $s$-th step, we will find our desired cycle.
For the second part, note that
\begin{align*}
\ell_{s} & = \Big(1-\frac{s}{t}\Big)^{s-1}\ell+\sum_{i=0}^{s-2}\left(1-\frac{s}{t}\right)^{i}\cdot\frac{\ell}{2}=\Big(1-\frac{s}{t}\Big)^{s-1}\frac{\ell}{2}+\sum_{i=0}^{s-1}\left(1-\frac{s}{t}\right)^{i}\cdot\frac{\ell}{2}\\
 & = \Big(1-\frac{s}{t}\Big)^{s-1}\frac{\ell}{2}+\frac{t}{s}\left(1-\Big(1-\frac{s}{t}\Big)^{s}\right)\cdot\frac{\ell}{2}.
\end{align*}
If $t$ is large enough depending on $s$, we have $(1-\frac{s}{t})^{s-1}=1-o_{t}(1)$
and $1-(1-\frac{s}{t})^{s}=\frac{s^{2}}{t}-O_{t}(\frac{s^{4}}{t^{2}})$.
Therefore in this case,
\[
\ell_{s}\ge\Big(1-o_{t}(1)\Big)\frac{\ell}{2}+\frac{t}{s}\left(\frac{s^{2}}{t}-O_{t}\Big(\frac{s^{4}}{t^{2}}\Big)\right)\frac{\ell}{2}\ge\frac{s}{2}\ell.
\]
\end{proof}

\subsection{Finding long cycles}

In this subsection we prove Theorem \ref{thm:linearcycle}.

We first state a structural lemma, which a.a.s.~finds almost
vertex-disjoint highly connected subgraphs in the random subgraph
of our given graph.
Afterwards, we will use the lemmas developed in the previous subsection
in order to find a long cycle in various situations.
\begin{lem}
\label{lem:structure}Let $\varepsilon\le\frac{1}{2}$ be a fixed
positive real. Let $G$ be a graph on $n$ vertices
with minimum degree at least $k$,
and let $p=\frac{\omega}{k}$ for some function $\omega=\omega(k) \ll k$
that tends to infinity as $k$ does. Suppose that $G$ does not contain
a bipartite subgraph of average degree at least $\frac{5}{4}k$. Then
$G_{p}$ a.a.s. admits a partition $V=X\cup Y$ of its vertex set,
and contains a collection $\mathcal{C}$ of subgraphs of $G_{p}$
satisfying:
\begin{itemize}
  \setlength{\itemsep}{1pt} \setlength{\parskip}{0pt}
  \setlength{\parsep}{0pt}
\item[(a)] for every $C \in \mathcal{C}$, $C$ is $(\log \omega)^{1/5}$-connected;
\item[(b)] the sets $X \cap V(C)$ for $C \in \mathcal{C}$ form a partition of $X$, and $|Y|=o(n)$;
\item[(c)] for every $C \in \mathcal{C}$, one of the following holds:
\begin{itemize}
\item[(i)] the graph $G[V(C)]$ contains at least $(1-\varepsilon)|V(C)|$ vertices of degree at least $(1-\varepsilon)k$, or
\item[(ii)] $|Y \cap V(C)| = o(|V(C)|)$, the graph $G[X \cap V(C)]$ contains at least $(1-\varepsilon)|V(C)|$ vertices
of degree at least $\frac{k}{8}$, and there exists a bipartite graph $\Gamma_C \subseteq G$ with parts $X \cap V(C)$ and
$Y \setminus V(C)$ which contains at least $\frac{|V(C)|\varepsilon^2 k}{4}$ edges and
has maximum degree at most $\frac{8}{\varepsilon}k$.
\end{itemize}
\end{itemize}
\end{lem}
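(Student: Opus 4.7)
The plan is to construct the partition $V = X \cup Y$ and the family $\mathcal{C}$ by an iterative peeling procedure applied to $G_p$. Initialize $R = V(G)$ and $X = Y = \emptyset$. In each round I identify a subset $S \subseteq R$ of size $\Omega(k)$, apply Mader's theorem (Lemma \ref{lem:highconnectedgraph} (iv)) to $(G[S])_p$ to extract a $(\log\omega)^{1/5}$-connected subgraph $C \subseteq G_p$, update $X$, $Y$, and $\mathcal{C}$ according to which of the two structural alternatives $S$ falls into, and delete $V(C)$ from $R$. The iteration continues until $|R| = o(n)$, at which point the residue is absorbed into $Y$.

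The choice between case (i) and case (ii) is driven by a kernel-peeling operation inside $G[R]$: iteratively delete vertices whose current induced degree drops below $(1-\varepsilon)k$. If the resulting kernel $S$ is of size $\Omega(k)$, then $G[S]$ has minimum degree at least $(1-\varepsilon)k$, so $(G[S])_p$ has average degree $\Omega(\omega) \gg (\log\omega)^{1/5}$, and Mader's theorem together with Chernoff's inequality a.a.s. yield the desired case (i) component. Otherwise, most vertices of $R$ have been stripped, and since each of them has $G$-degree at least $k$ but internal degree below $(1-\varepsilon)k$ at the moment of its removal, each carries $\Omega(k)$ edges to $V\setminus R$ at that moment. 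I would then pick a dense subset $S$ of stripped vertices whose $G[S]$-degree is at least $k/8$ and whose bipartite edges to $V\setminus R$ total $\Omega(|S|k)$; Mader's theorem applied to $(G[S])_p$ produces the case (ii) component $C$, and truncating the bipartite side at maximum degree $8k/\varepsilon$ (which costs at most a constant fraction of edges) yields the required $\Gamma_C$.

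The hypothesis that $G$ contains no bipartite subgraph of average degree at least $5k/4$ is the key ingredient that keeps $|Y| = o(n)$ and controls the number of case (ii) rounds. A straightforward aggregation argument shows that the union of the $\Gamma_C$ over all case (ii) components, together with the stripped vertices deposited into $Y$, would form a bipartite subgraph of $G$ whose average degree exceeds $5k/4$ unless both the total size of $Y$ and the number of case (ii) rounds are $o(n)$ and $o(n/k)$ respectively. The main obstacle is orchestrating the kernel thresholds, the size bounds on $S$, and the truncation of $\Gamma_C$ so that every extracted $C$ has size $\Omega(k)$ (needed for Mader to deliver the required connectivity) while simultaneously preserving enough bipartite structure in case (ii) for the global hypothesis to bite. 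Once this structural bookkeeping is in place, all probabilistic statements reduce to a union bound of Chernoff estimates over the $O(n/k)$ rounds of the iteration.
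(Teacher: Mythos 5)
Your plan is a genuinely different strategy from the paper's, but it has a gap at the very place where the paper works hardest. You propose to identify a set $S \subseteq R$ of size $\Omega(k)$ with good internal degree properties, and then invoke Mader's theorem on $(G[S])_p$ to obtain a $(\log\omega)^{1/5}$-connected subgraph $C$. The problem is that Mader's theorem guarantees only a $t$-connected subgraph \emph{somewhere} in $(G[S])_p$; it says nothing about the order of that subgraph, nor about the $G$-degrees of its vertices. The output could well be a tiny $t$-connected subgraph on $O(t)$ vertices, in which case $|V(C)|$ is negligible, removing it makes essentially no progress on $R$, and neither alternative (c)-(i) nor (c)-(ii) can hold for $C$ (since every vertex of $V(C)$ has almost all of its $G$-neighbors outside $V(C)$, so no vertex has degree $\Omega(k)$ inside $G[V(C)]$ or $G[X\cap V(C)]$). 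Without a mechanism forcing $|V(C)|=\Omega(k)$ — or forcing $C$ to inherit the degree profile of $S$ — the iteration stalls and the collection $\mathcal{C}$ cannot satisfy property (c).

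The paper closes exactly this gap with a global extremal argument rather than a local peeling. It fixes a collection $\mathcal{C}_0$ of edge-disjoint $t$-connected subgraphs of $G_p$, each of order at least $t^4$, chosen to \emph{maximize} the total number of $G$-edges covered and, subject to that, to minimize the sum of orders. Maximality is then used as a lever: if too many edges were uncovered, one could $2$-color the members of $\mathcal{C}_0$ and extract a fresh $t^4$-connected bipartite subgraph of $G_p$ disjoint from all of them, contradicting maximality (this is where the hypothesis $e_{G_p}(A,B)\ge\frac12 n\omega^{1/2}$ whenever $e_G(A,B)\ge nk/\omega^{1/2}$ enters). Minimality of orders forces any two members to intersect in fewer than $t$ vertices; auxiliary bipartite incidence graphs between vertices/components then control how many vertices are multiply covered. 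All the structural conclusions — the near-partition of $X$, the $o(n)$ bound on $|Y|$, the degree-$k/8$ statement in (c)-(ii) — fall out of these two global constraints plus a small amount of sprinkling. Your proposal has no analog of the maximality lever, and that is what it would need to work: you either have to show that your $S$-extraction produces $C$ with $|V(C)| = (1-o(1))|S|$ (which is false in general for Mader), or replace the round-by-round Mader call with a coverage-maximizing family as the paper does. The degree-degradation issue you gesture at (vertices of $R$ losing neighbors to earlier $V(C)$'s) is real but secondary; the primary obstruction is the uncontrolled size and location of the Mader subgraph.
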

We defer the proof of the structural lemma to later and first
prove Theorem \ref{thm:linearcycle} using the structural lemma. Let
$G$ be a given graph of minimum degree at least $k$ on $n$ vertices,
and let $\varepsilon$ be a given positive real. For $p=\frac{\omega}{k}$,
it suffices to prove the statement for $\omega \ll k$ since the conclusion
for larger $\omega$ follows from monotonicity.
Set $p_{1}=p_{2}=p_{3}=\frac{\omega}{3k}$.
Suppose that $\varepsilon \le \frac{1}{50}$ is given (for larger values of $\varepsilon$,
we may assume that $\varepsilon=\frac{1}{50}$).

\medskip{}

\noindent \textbf{Case 1. There exists a bipartite subgraph of $G$ of average degree at least $\frac{5}{4}k$.}

\smallskip

We can apply Lemma \ref{lem:DFS_cycle} to the bipartite subgraph
to a.a.s. find a cycle in $G_{p_{1}}$ of length at least $(1-o(1))\frac{5}{4}k\ge k+1$.

If $G$ does not contain such a bipartite subgraph, then we apply
Lemma \ref{lem:structure} to a.a.s. find a collection $\mathcal{C}$
of subgraphs which induce highly-connected subgraphs of $G_{p_{1}}$.

\medskip{}

\noindent \textbf{Case 2. There exists $C \in \mathcal{C}$ such that Property (c)-(i) holds.}

\smallskip

By Lemma \ref{lem:structure} (a), $C$ is $\frac{1}{\varepsilon}$ connected, and
we can apply Lemma \ref{lem:iterative_cycle} with $t = \frac{1}{\varepsilon}$, $\alpha = 1-\varepsilon$, $G_1 = G[V(C)]$ and $G_2 = C$ to a.a.s.~obtain a cycle of length at least $(1-10\varepsilon)\cdot(1-\varepsilon)k\ge(1-11\varepsilon)k$
in $G_{p_2}[V(C)] \cup C \subset G_{p_1} \cup G_{p_2}$.

\medskip{}

\noindent \textbf{Case 3. Property (c)-(ii) holds for all $C \in \mathcal{C}$.}

\smallskip

For each $C \in \mathcal{C}$, there exists a bipartite graph $\Gamma_C$ with
parts $X \cap V(C)$ and $Y \setminus V(C)$ which has at least
$\frac{|V(C)|\varepsilon^2 k}{4}$ edges and maximum degree at most $\frac{8 k}{\varepsilon}$.
Expose the graph $G_{p_{2}}$, and for each $C\in\mathcal{C}$, let
$M_{C}$ be a maximum matching in $(\Gamma_C)_{p_2}$. Let $\mathcal{C}'=\{C\in\mathcal{C}\,:\,|M_{C}|\ge \frac{\varepsilon^{3}|V(C)|}{128} \}$.
\begin{lem}
\label{lem:stick}
We a.a.s. have $\sum_{C\in\mathcal{C}'}|V(C)|\ge\frac{n}{2}$.
\end{lem}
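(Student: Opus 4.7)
My plan is to show that $C \in \mathcal{C}'$ with probability $1 - o(1)$ uniformly in $C \in \mathcal{C}$, and then conclude by a first-moment/Markov argument. Indeed, Property~(b) together with Property~(c)(ii) of Lemma~\ref{lem:structure} gives $\sum_{C \in \mathcal{C}} |V(C)| = (1+o(1)) n$. Once we know that $\mathbb{P}(C \notin \mathcal{C}') = o(1)$ uniformly in $C$, we get $\mathbb{E}\bigl[\sum_{C \notin \mathcal{C}'} |V(C)|\bigr] = o(n)$, so by Markov's inequality $\sum_{C \notin \mathcal{C}'} |V(C)| \le n/4$ a.a.s., and the desired bound $\sum_{C \in \mathcal{C}'} |V(C)| \ge n/2$ follows.

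The heart of the argument is a uniform lower bound $\mathbb{E}[|M_C|] \ge (1-o(1)) \varepsilon^3 |V(C)|/64$, twice the threshold defining $\mathcal{C}'$. Setting $H_C := (\Gamma_C)_{p_2}$ and noting that $H_C$ is bipartite, K\"onig's theorem combined with Cauchy-Schwarz on a minimum vertex cover yields the deterministic bound
\[
|M_C| \ge \frac{|E(H_C)|^2}{\sum_v \deg_{H_C}(v)^2}.
\]
Since $(x,y) \mapsto x^2/y$ is jointly convex on $\{y > 0\}$, Jensen's inequality gives $\mathbb{E}[|M_C|] \ge (\mathbb{E}[|E(H_C)|])^2 / \mathbb{E}[\sum_v \deg_{H_C}(v)^2]$. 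Plugging in $\mathbb{E}[|E(H_C)|] \ge |V(C)|\varepsilon^2 \omega/12$ and expanding the denominator via the identity $\mathbb{E}[\sum_v \deg_{H_C}(v)^2] = 2|E(\Gamma_C)| p_2(1-p_2) + p_2^2 \sum_v \deg_{\Gamma_C}(v)^2$ together with $\sum_v \deg_{\Gamma_C}(v)^2 \le 2 \Delta(\Gamma_C) |E(\Gamma_C)| \le 16k |E(\Gamma_C)|/\varepsilon$ yields, after simplification, the claimed bound.

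For the concentration I plan to invoke Talagrand's inequality: $|M_C|$ is a $1$-Lipschitz and $1$-certifiable function of the edge indicators of $\Gamma_C$, so
\[
\mathbb{P}\bigl(|M_C| \le M_C^* - s\bigr) \le 2 \exp\bigl(-s^2/(4 M_C^*)\bigr),
\]
where $M_C^*$ is the median of $|M_C|$. The same inequality bounds $|M_C^* - \mathbb{E}[|M_C|]|$ by $O(\sqrt{M_C^*})$, so $M_C^* = (1-o(1)) \mathbb{E}[|M_C|]$ whenever $\mathbb{E}[|M_C|] \to \infty$. Property~(a) of Lemma~\ref{lem:structure} forces $|V(C)| \ge (\log \omega)^{1/5} \to \infty$, so $\mathbb{E}[|M_C|] \to \infty$ uniformly in $C$; hence for $\omega$ large enough, $M_C^* \ge \tfrac{3}{2} \cdot \varepsilon^3|V(C)|/128$, and taking $s = M_C^*/3$ gives $\mathbb{P}\bigl(|M_C| < \varepsilon^3|V(C)|/128\bigr) \le 2\exp(-M_C^*/36) = o(1)$ uniformly in $C$.

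The main obstacle is establishing the correct order $\Omega(\varepsilon^3|V(C)|)$ in the lower bound on $\mathbb{E}[|M_C|]$. The natural approach---take a matching of size $\varepsilon^3|V(C)|/32$ in $\Gamma_C$ and lower bound $|M_C|$ by the number of its edges that survive in $G_{p_2}$---yields only $\mathbb{E}[|M_C|] = \Omega(\varepsilon^3|V(C)| \omega/k)$, which is far too small since $\omega \ll k$. The Cauchy-Schwarz-based bound captures the correct order because, even though $H_C$ is a factor $p_2 = \omega/(3k)$ sparser than $\Gamma_C$, its typical degrees drop by the same factor, so the ratio $|E(H_C)|^2/\sum_v \deg_{H_C}(v)^2$ retains the scale $\varepsilon^3 |V(C)|$.
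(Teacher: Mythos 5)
Your proposal is correct but takes a genuinely different route from the paper. The paper bounds $\mathbb{P}(C\notin\mathcal{C}')$ directly by a union bound over all putative maximum matchings of size $t\le \varepsilon^3|V(C)|/128$: fix $t$ vertex-disjoint edges of $\Gamma_C$ (at most $\binom{m_C}{t}$ ways), require them present (factor $p_2^t$), and use the maximum-degree condition to show at least $\varepsilon^2|V(C)|k/8$ edges of $\Gamma_C$ avoid the chosen $t$-set, all of which must be absent for the matching to be maximal (factor $(1-p_2)^{\varepsilon^2|V(C)|k/8}$). Summing over $t$ yields a failure probability that is exponentially small in $|V(C)|\omega$. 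Your argument instead lower-bounds $\mathbb{E}[|M_C|]$ via the K\"onig--Cauchy--Schwarz inequality $|M_C|\ge |E(H_C)|^2/\sum_v\deg_{H_C}(v)^2$ together with the joint convexity of $(x,y)\mapsto x^2/y$, then invokes Talagrand's inequality for concentration; both the expectation computation (using the second-moment identity $\mathbb{E}[\sum\deg_{H_C}^2]=2|E(\Gamma_C)|p_2(1-p_2)+p_2^2\sum\deg_{\Gamma_C}^2$ and $\Delta(\Gamma_C)\le 8k/\varepsilon$) and the certifiability hypotheses of Talagrand check out, and the resulting failure probability $e^{-\Omega(\varepsilon^3|V(C)|)}$ is $o(1)$ uniformly since the $(\log\omega)^{1/5}$-connectivity forces $|V(C)|\to\infty$. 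The paper's bound is quantitatively much stronger (exponential in $|V(C)|\omega$ rather than $|V(C)|$), but both suffice for the Markov step; your argument is conceptually cleaner and more modular (the expectation bound and the concentration step are decoupled, and Talagrand would apply to any Lipschitz certifiable functional of the edges), at the cost of invoking heavier machinery than the paper's elementary counting. One small point worth spelling out: the Jensen step requires the lower semi-continuous extension of the perspective function $x^2/y$ to $(0,0)$, which is the standard convention and is consistent with $|M_C|=0$ when $H_C$ is empty.
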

\begin{proof}
For a graph $C\in\mathcal{C}$, we first estimate the probability
that $C\notin\mathcal{C}'$. Let $X_{C}=X\cap V(C)$, $Y_{C}=Y\setminus V(C)$,
and $m_C$ be the number of edges of $\Gamma_C$ (thus $\frac{|V(C)|\varepsilon^2 k}{4} \le m_C \le |V(C)| \cdot \frac{8k}{\varepsilon}$).
Since the maximum degree is at most $\frac{8k}{\varepsilon}$, we know that for
every collection of $t \le \frac{\varepsilon^3 |V(C)|}{128}$ vertex-disjoint edges,
there are at least $\frac{|V(C)|\varepsilon^{2}k}{4}-2t \cdot \frac{8k}{\varepsilon} \ge \frac{\varepsilon^{2}|V(C)|k}{8}$
edges in $\Gamma_C$ not intersecting any of the edges in the collection.
Therefore the probability that $|M_C|=t$ is at most
\begin{align*}
 {m_C \choose t} \cdot p_2^t \cdot (1-p_2)^{\varepsilon^2|V(C)|k/8}
 &\le \left( \frac{em_Cp_2}{t} \right)^t (1-p_2)^{\varepsilon^2|V(C)|k/8} \\
 &\le \left( \frac{e}{t} \cdot \frac{8|V(C)|k}{\varepsilon} \cdot \frac{\omega}{3k} \right)^t e^{-\varepsilon^2|V(C)|kp_2/8} \\
 &= \left( \frac{8e\omega|V(C)|}{3t\varepsilon} \right)^{t} e^{-\varepsilon^2|V(C)|\omega/24}.
\end{align*}
By parameterizing $t$ as $t = \alpha|V(C)|$ ($\alpha \le \frac{\varepsilon^3}{128}$),
the right hand side becomes
\begin{align*}
\left( \frac{8e\omega}{3\alpha \varepsilon} \right)^{\alpha|V(C)|} e^{-\varepsilon^2|V(C)|\omega/24}
= e^{\alpha \log (8e\omega/(3\alpha\varepsilon)) |V(C)|} e^{-\varepsilon^2|V(C)|\omega/24}
\le e^{-\varepsilon^2|V(C)|\omega/48}.
\end{align*}
By taking the union bound over
all values of $t$ from $1$ to $\frac{\varepsilon^3 |V(C)|}{128}$, we see
that the probability of $|M_C| < \frac{\varepsilon^3|V(C)|}{128}$, or equivalently
$C \notin \mathcal{C}$, is at most
\begin{align*}
\frac{\varepsilon^3 |V(C)|}{128} \cdot e^{-\varepsilon^2|V(C)|\omega/48} = o(1).
\end{align*}
By Markov's inequality, it follows that $\sum_{C \notin \mathcal{C}'} |X \cap V(C)| < \frac{n}{4}$
a.a.s. If this event holds, then since $|X \cap V(C)| = (1-o(1))|V(C)|$ for all $C \in \mathcal{C}$,
we have
\begin{align*}
\sum_{C\in \mathcal{C}'} |V(C)| & =  (1+o(1)) \sum_{C\in \mathcal{C}'} |X \cap V(C)|
 =  (1+o(1)) \left(\sum_{C\in \mathcal{C}} |X \cap V(C)| -\sum_{C\notin \mathcal{C}'} |X \cap V(C)| \right) \\
& = (1+o(1)) \left( (1-o(1))n - \frac{n}{4} \right) \ge \frac{n}{2}.
\end{align*}
\end{proof}
Condition on the conclusion of Lemma \ref{lem:stick}. Consider an
auxiliary bipartite graph $\Gamma$ over a vertex set consisting of
two parts $\mathcal{C}$ and $Y$ (where $Y$ is the set given by
Lemma \ref{lem:structure}). A pair $\{C,y\}$ forms an edge if $y$
is an endpoint of some edge in $M_{C}$. Since $|X \cap V(C)| \ge \frac{k}{8}$
and $|X| = (1-o(1))n$, the number of vertices of $\Gamma$
is $|\mathcal{C}|+|Y|\le\frac{n}{k/8}+o(n)=o(n)$ and the
number of edges is $\sum_{C\in\mathcal{C}} |M_C| \ge \sum_{C\in\mathcal{C}'}\frac{\varepsilon^{3}}{128}|V(C)|\ge\frac{\varepsilon^{3}}{256}n$.
Let $t\ge100$ be a large enough constant. By Lemma \ref{lem:highconnectedgraph} (iv),
there exists a $t$-connected subgraph $\Gamma'$ of $\Gamma$ over
the vertex set $C_{1},\cdots,C_{s},y_{1},\cdots,y_{s'}$ of $\Gamma$.
We claim that the induced subgraph $H$ of $G_{p_{1}}\cup G_{p_{2}}$
on the vertex set $V(C_{1})\cup\cdots\cup V(C_{s})\cup\{y_{1},\cdots,y_{s'}\}$
is $t$-connected (note that $s,s'\ge t$). Suppose that this is the
case. Then since the sets $V(C_{1})\cap X,\cdots,V(C_{s})\cap X$
are vertex disjoint and each graph $G[V(C_{i})\cap X]$ contains at
least $(1-\varepsilon)|V(C_{i})|$ vertices of degree at least $\frac{k}{8}$,
by Lemma \ref{lem:iterative_cycle} for each fixed $i$, $G_{p_{3}}[V(C_{i})\cap X]$
a.a.s. contains a cycle of length at least $(1-10\varepsilon)\frac{k}{8}\ge\frac{k}{10}$.
Thus in $H\cup G_{p_{3}}$ we a.a.s. have at least $(1-o(1))s$ vertex disjoint
cycles of length at least $\frac{k}{10}$ in the graph.
Since $H$ is $t$-connected, for large enough $t$, by Lemma \ref{lem:combine_cycle} we can
use $30$ of the vertex disjoint cycles to a.a.s.~find in
$H \cup G_{p_3} \subseteq G_{p_1} \cup G_{p_2} \cup G_{p_{3}}$ a cycle of length at least
$\frac{1}{2}\cdot 30 \cdot\frac{k}{10} > k$.

Therefore to conclude the proof of Theorem \ref{thm:linearcycle}, it suffices
to prove that $H$ is $t$-connected. Let $S$ be a subset
of at most $t-1$ vertices of $V(H)$. It suffices to prove
that $H\setminus S$ is a connected graph. We do this by
exploiting the $t$-connectivity of $\Gamma'$. Let $v$ and $w$
be two arbitrary vertices of $H\setminus S$. For a vertex
$x\in S$, if $x\in Y$, then remove $x$ from $\Gamma'$. Otherwise,
if $x\in X$ and there is a matching edge belonging to some $M_{C}$
incident to $x$, then remove the corresponding edge from $\Gamma'$ (note
that there is a one-to-one correspondence between such edges and edges
of $\Gamma'$). 

First suppose that $v,w\in X$. Since we removed
at most $t-1$ vertices/edges from the graph $\Gamma'$, without loss
of generality, there still exists a path $C_{1}z_{1}C_{2}\cdots z_{h-1}C_{h}$
in $\Gamma'$ for $v\in V(C_{1})$ and $w\in V(C_{h})$. For each
$z_{i}$, there exist vertices $z_{i}'\in V(C_{i})\setminus S$ and
$z_{i}''\in V(C_{i+1})\setminus S$ such that $\{z_{i},z_{i}'\}\in M_{C_{i}}$
and $\{z_{i},z_{i}''\}\in M_{C_{i+1}}$. Let $z_0'' = v$ and $z_h' = w$.
Since each $C_{i}$ is $t$-connected,
for $i=1,2,\cdots,h$, we can find a path from $z_{i-1}''$ to $z_{i}'$
in $C_{i+1}\setminus S$. By combining these 
paths with the edges $\{z_i', z_i\}$ and $\{z_i, z_i''\}$ for $i = 
1,2,\cdots,h-1$, 
we obtain a path from $z_{0}''=v$ to $z_{h}'=w$.

Second, suppose that $v \in X$ and $w \in Y$. Since we removed at most $t-1$
edges from $H$, there exists an edge of $H$ incident to $w$ whose other 
endpoint $w'$ is in $X$. By the case above, we see that there exists a path
from $v$ to $w'$ which implies that there is a path from $v$ to $w$. The last
case when $v,w \in Y$ can be handled similarly. \hfill $\qed$

\subsection{Proof of the structural lemma}

In this subsection, we prove the structural lemma, Lemma \ref{lem:structure}.
The proof is quite technical so we begin this section by briefly explaining
its idea. 

Let $\mathcal{C} = \{C_1, C_2, \cdots\}$ be a collection of 
edge-disjoint $t$-connected 
subgraphs of $G_p$ (where $t$ is some large integer), which covers
the maximum number of edges of $G_1$ and has the minimum number of subgraphs in it.
Note that if the number of vertices of $G$ is $O(k)$, then the collection
$\mathcal{C}$ likely consits of a single subgraph. However, we put no restriction
on the number of vertices, and thus $G_p$ might even consist of several 
connected components. Thus $\mathcal{C}$ is a non-trivial collection of subgraphs
of $G_p$. This collection will have interesting properties which will eventually imply
our lemma.

Note that two subgraphs $C_i$ and $C_j$ can only share at most $t-1$ vertices
since otherwise $C_i$ and $C_j$ can be combined into a single $t$-connected
subgraph of $G_p$ to contradict the minimality
of the collection. Hence every pair subgraphs in $\mathcal{C}$ are `almost' disjoint.
Moreover, if there are too many edges of the graph $G$ 
not covered by any of the subgraphs in $\mathcal{C}$, then we will be able to
find a $t$-connected subgraph of $G_1$ which is edge-disjoint to all subgraphs
in $\mathcal{C}$ and thus contradicts the maximality of the collection. 
Thus most edges of $G$ are covered by some subgraph in $\mathcal{C}$.

Afterwards, we find a subcollection $\mathcal{C}'$ of $\mathcal{C}$ for which
the following holds: the number of vertices which are covered by
at least two subgraphs in $\mathcal{C}'$ is small. Moreover, the collection
$\mathcal{C}'$ will maintain the property that most edges of $G$ are covered
by some subgraph. Now let $X$ be the set of vertices which are covered
by exactly one subgraph in $\mathcal{C}'$, and $Y$ be the rest of the vertices.
We can see that most edges of $G$ lie within subgraphs in the
collection $\mathcal{C}'$, and that every pair of subgraphs in $\mathcal{C}'$ 
are only allowed to intersect in $Y$. This illustrates how the structure
claimed in Lemma \ref{lem:structure} arises from the collection $\mathcal{C}$. 

\medskip

We first prove the following lemma, which forms an intermediate step
in proving Lemma \ref{lem:structure}.
\begin{lem}
\label{lem:structure1}Let $G$ be a graph on $n$ vertices
with minimum degree at least
$k$, and let $p=\frac{\omega}{k}$ for some function $\omega=\omega(k)\ll k$
that tends to infinity as $k$ does. Suppose that $G$ does not contain
a bipartite subgraph of average degree at least $\frac{5}{4}k$. Then
$G_{p}$ a.a.s. admits a partition $V=X\cup Y$ of its vertex set,
and contains a collection $\mathcal{C}$ of subgraphs satisfying
the following:
\begin{itemize}
  \setlength{\itemsep}{1pt} \setlength{\parskip}{0pt}
  \setlength{\parsep}{0pt}
\item[(a)] every graph $C \in \mathcal{C}$ is $(\log \omega)^{1/5}$-connected;
\item[(b)] the sets $X \cap V(C)$ for $C \in \mathcal{C}$ form a partition of $X$, and $|Y|=o(n)$;
\item[(c)] for every $C \in \mathcal{C}$, $|Y \cap V(C)| = o(|V(C)|)$ and the induced subgraph $G[X\cap V(C)]$ contains at least $(1-o(1))|V(C)|$ vertices of degree at least $k/8$;
\item[(d)] for every $C \in \mathcal{C}$ and every vertex $v \in X \cap V(C)$, there are at most $o(k)$ edges of $G$ incident to $v$ whose other endpoint lies in $X \setminus V(C)$.
\end{itemize}
\end{lem}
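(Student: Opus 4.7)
The strategy, outlined informally just before the lemma, is to build an extremal family of edge-disjoint highly connected subgraphs of $G_p$ and then refine it. Set $t := (\log \omega)^{1/5}$, which matches the required connectivity in (a) and satisfies $t \to \infty$ with $t = o(\omega)$. Let $\mathcal{C} = \{C_1, C_2, \ldots\}$ be a collection of pairwise edge-disjoint $t$-connected subgraphs of $G_p$, chosen first to maximize $\sum_i e(C_i)$ and then, subject to this, to minimize $|\mathcal{C}|$. Two structural facts follow immediately: any two members share fewer than $t$ vertices (else Lemma~\ref{lem:highconnectedgraph}(iii) merges them, contradicting minimality), and the subgraph of $G_p$-edges uncovered by $\mathcal{C}$ has average degree less than $4t$ (else Mader's theorem, Lemma~\ref{lem:highconnectedgraph}(iv), yields another $t$-connected subgraph edge-disjoint from $\mathcal{C}$, contradicting maximality), so at most $2tn$ edges of $G_p$ are uncovered. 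The hypothesis that $G$ has no bipartite subgraph of average degree $\geq \tfrac{5}{4}k$, combined with the max-cut bound, forces $e(G) \leq \tfrac{5}{4}kn$, so Chernoff gives $e(G_p) = \Theta(n\omega)$ a.a.s., and the uncovered edges form an $o(1)$-fraction of $G_p$.

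Next, I would extract a subcollection $\mathcal{C}' \subseteq \mathcal{C}$ in which almost every vertex is covered by exactly one member. A greedy-style construction is natural: process the $C \in \mathcal{C}$ in decreasing order of $|V(C)|$ and admit $C$ to $\mathcal{C}'$ only when $|V(C) \cap \bigcup_{C' \in \mathcal{C}'} V(C')| \leq |V(C)|/3$. Combined with the pairwise intersection bound $|V(C) \cap V(C')| < t$ and a double counting on $\sum_C |V(C)|$, this should yield a subcollection $\mathcal{C}'$ in which only $o(n)$ vertices belong to zero or to two or more members, most $G_p$-edges are still covered, and uniformly for each retained $C$ one has $|V(C) \setminus X| = o(|V(C)|)$, where $X$ denotes the set of vertices belonging to exactly one member of $\mathcal{C}'$. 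Setting $Y := V \setminus X$, properties (a) and (b) are immediate.

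For properties (c) and (d), the key observation is structural: if $v \in X \cap V(C)$ has a $G_p$-edge $\{v,w\}$ with $w \in X \setminus V(C)$, then since $v$ and $w$ each belong to a unique and distinct member of $\mathcal{C}'$, the edge cannot lie in any $E(C')$ with $C' \in \mathcal{C}'$; hence it is either uncovered by $\mathcal{C}$ (bounded globally by $2tn$) or covered only by some $C'' \in \mathcal{C} \setminus \mathcal{C}'$ (bounded if the greedy rule ensures few discarded edges). By Markov's inequality, all but $o(n)$ vertices $v$ have fewer than $o(\omega)$ such crossing $G_p$-edges; Chernoff's inequality then pulls this back to at most $o(k)$ crossing $G$-edges, establishing (d). The degree bound $\geq k/8$ in (c) then follows by combining $\deg_G(v) \geq k$, the $o(k)$ cross-subgraph count from (d), and the $o(|V(C)|)$ bound on $Y \cap V(C)$ (which limits $G$-edges from $v$ to $Y \cap V(C)$).

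The main obstacle lies entirely in the refinement step: the greedy rule must be tuned so that \emph{simultaneously} (i) only $o(n)$ vertices end up outside $X$, (ii) the bound $|Y \cap V(C)| = o(|V(C)|)$ holds uniformly over $C \in \mathcal{C}'$, and (iii) the discarded subgraphs $\mathcal{C} \setminus \mathcal{C}'$ together cover only $o(n\omega)$ edges of $G_p$. Balancing these three constraints against the pairwise intersection bound and the global coverage guarantee is the delicate heart of the argument and will require careful accounting; everything else in the proof is either an immediate consequence of the extremal construction or a standard concentration estimate.
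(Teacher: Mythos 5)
Your extremal choice of $\mathcal{C}$ optimizes the wrong quantity, and the rest of the sketch inherits the damage. The paper defines ``covered'' for an edge of $G$, not of $G_p$: an edge $e \in E(G)$ is covered by $C$ if both endpoints of $e$ lie in $V(C)$, and $\mathcal{C}_0$ is chosen among edge-disjoint $t$-connected subgraphs of $G_p$ of order at least $t^4$ so as to maximize the number of covered $G$-edges (and, subject to that, minimize the sum of orders). You instead maximize $\sum_i e(C_i)$, the number of $G_p$-edges lying inside members of the family, and deduce only that few $G_p$-edges are uncovered. That controls nothing about $G$-edges, yet properties (c) and (d) are statements about $G$-edges. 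The paper's objective is what makes (d) provable: if some vertex of $X_0$ had many $G$-incident edges not covered by $\mathcal{C}_0$, a random red/blue coloring of the members produces disjoint $A$ and $B$ with $e_G(A,B) \ge nk/\omega^{1/2}$; the pre-conditioned event (that such $A,B$ must carry $\Omega(n\omega^{1/2})$ edges of $G_p$) together with Mader's theorem then yields a $t^4$-connected subgraph of $G_p$ edge-disjoint from $\mathcal{C}_0$, contradicting maximality of covered $G$-edges. None of this is available to you when the objective counts $G_p$-edges.

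The ``Chernoff pulls this back to $o(k)$ crossing $G$-edges'' step is circular. The sets $V(C)$, $X$ and $Y$ are functions of the realized $G_p$, so once $G_p$ is exposed and $\mathcal{C}'$ is built there is no independence left between which $G$-edges of $v$ are ``crossing'' and whether they appear in $G_p$; you cannot fix a target crossing set for $v$ and then apply concentration to the $G_p$-indicator of that set. A union bound over all adaptively possible choices of the crossing set is hopeless since Chernoff only gives $e^{-\Theta(\omega)}$. The paper sidesteps this by working entirely forward in $G$-edges via $X_0'$, the definition of $\mathcal{C}_1$, and Claim~\ref{claim:3dot5}. Finally, your route to the $k/8$ degree bound in (c) only subtracts edges of $v$ going to $X\setminus V(C)$ and to $Y \cap V(C)$; it never accounts for $G$-edges of $v$ into $Y \setminus V(C)$, which can be large. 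The paper removes these via the set $X_1'$ of vertices with at least $\tfrac{3}{4}k$ neighbors in $Y_1$ and invokes the ``no bipartite subgraph of average degree $\ge \tfrac{5}{4}k$'' hypothesis to show $|X_1'|$ is small; some analogue of that step is unavoidable, and it is missing from your outline.
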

\begin{proof}
Let $t=(\log\omega)^{1/5}$ and $V=V(G)$.
A straightforward
application of Chernoff's inequality and of the union bound shows that $G_{p}$
a.a.s. satisfies the following property: ``for every pair of sets $A$
and $B$ that have $e_{G}(A,B)\ge\frac{nk}{\omega^{1/2}}$, we have
$e_{G_{p}}(A,B)\ge\frac{1}{2}n\omega^{1/2}$''. Expose $G_{p}$ and
condition on this event.

Let $C$ be a graph defined over a subset of vertices of $V$.
For an edge $e$ of $G$, we say that $e$ is \emph{covered} by
$C$ if both of the endpoints of $e$ belong to
$V(C)$
(note that this does not necessarily imply that $e$ is an edge of $C$).
Let $\mathcal{C}_{0}$ be a collection of $t$-connected edge-disjoint subgraphs
of $G_{p}$ of order at least $t^{4}$ each, which maximizes the total
number of edges covered and whose sum of orders is minimized.
Note that for every pair of
graphs $C,C'\in\mathcal{C}_{0}$, we have $|V(C)\cap V(C')|<t$
as otherwise by Lemma \ref{lem:highconnectedgraph} (iii) we can replace
the two graphs $C$ and $C'$ in $\mathcal{C}_{0}$ by a single graph
$C\cup C'$ in order to find a collection of $t$-connected edge-disjoint
subgraphs that contradicts the minimality of sum of orders of the
collection $\mathcal{C}_{0}$. We will repeatedly apply this idea
throughout this proof; the graphs in $\mathcal{C}_{0}$ cannot be
combined to give another $t$-connected subgraph.

\medskip
\noindent \textbf{Step 1 : Initial partition}
\medskip

Let $X_{0}$ be the set of vertices which are contained in at most
$t^{4}$ graphs in $\mathcal{C}_{0}$, and let $Y_{0}=V\setminus X_{0}$.

\begin{claim}
\label{claim:1}$|Y_{0}|\le\frac{4n}{t^{3}}$.\end{claim}
\begin{proof}
Consider an auxiliary bipartite graph $\Gamma_{0}$ whose vertex
set consists of two parts, where one part is $V$ and the
other part is $\mathcal{C}_{0}$. A pair $(v,C)$ forms an edge in
$\Gamma_{0}$ if $v\in V(C)$. Suppose that $\Gamma_{0}$ contains
a $t$-connected subgraph $\Gamma_{0}'$, and let $C_{1},C_{2},\cdots,C_{s}$
be the graphs in $V(\Gamma_{0}')\cap\mathcal{C}_{0}$. We claim that
the union $C_{\sigma}=C_{1}\cup C_{2}\cup\cdots\cup C_{s}$
forms a $t$-connected graph; thus deducing a contradiction to the minimality
assumption of $\mathcal{C}_{0}$. Indeed, suppose that we removed a set $S$ of $t-1$
vertices from $C_{\sigma}$, and let $v,w$ be two vertices which
have not been removed. Since $\Gamma_{0}'$ is $t$-connected, after
removing $S$ from the $V$ part of the graph $\Gamma_{0}'$, we still
have a connected graph, and thus without loss of generality we can find a
path of the form $(C_{1},y_{1},C_{2},y_{2},\cdots,y_{h-1},C_{h})$
in the graph $\Gamma_0'$ where $v\in C_{1}$ and $w\in C_{h}$. Let $y_{0}=v$
and $y_{h}=w$. For each $i$, we have $y_{i-1},y_{i}\in V(C_{i})$,
and since $C_{i}$ is $t$-connected, there exists a path from $y_{i-i}$
to $y_{i}$ in the graph $C_{i}\setminus S$. By combining these
paths, we can find a path from $y_{0}=v$ to $y_{h}=w$ in $C_{\sigma}\setminus S$.

As mentioned above, this implies that we cannot have a $t$-connected
subgraph of $\Gamma_{0}$. Since each vertex in $Y_0$ is contained in
more than $t^4$ graphs in $\mathcal{C}_0$, and each graph in $\mathcal{C}_0$
is of order at least $t^4$, the number of edges of $\Gamma_{0}$
is at least $\frac{1}{2}\Big(t^{4}|Y_{0}|+t^{4}|\mathcal{C}_{0}|\Big)$,
by Lemma \ref{lem:highconnectedgraph} (iv) we have
\[
\frac{1}{2}\Big(t^{4}|Y_{0}|+t^{4}|\mathcal{C}_{0}|\Big)\le2t(n+|\mathcal{C}_{0}|),
\]
from which it follows that $|Y_{0}|\le\frac{4n}{t^{3}}$.
\end{proof}

Let $X_{0}'$ be the
subset of vertices $v\in X_{0}$ for which there are at least $\frac{k}{\omega^{1/4}}$
edges in $G[X_{0}]$ incident to $v$ that are not covered by any of
the graphs $C\in\mathcal{C}_{0}$.

\begin{claim}
\label{claim:2}$|X_{0}'|<\frac{n}{t^{3}}$.
\end{claim}
\begin{proof}
Suppose that $|X_{0}'|\ge\frac{n}{t^{3}}$. In this case, we claim
that we can find a $t^{4}$-connected subgraph of $G_{p}$ which is
edge-disjoint from all the graphs in $\mathcal{C}_{0}$. Since a $t^{4}$-connected
graph is necessarily a $t$-connected graph with at least $t^{4}$
vertices, this will contradict the maximality of the family $\mathcal{C}_{0}$.

Color each graph in $\mathcal{C}_{0}$ by either red or blue, uniformly
and independently at random. Let $A$ be the collection of vertices
$v\in X_{0}'$ for which all the graphs in $\mathcal{C}_{0}$ that
contain $v$ are of color red, and similarly define $B$ for blue
graphs. Let $\{v,w\}$ be an edge in $G[X_{0}]$ which is not
covered by any graph in $\mathcal{C}_{0}$. Then since there are no
graphs in $\mathcal{C}_0$ containing both $v$ and $w$, the probability that $\{v,w\}$
contributes towards $e_{G}(A,B)$ is exactly $2^{-d_{v}-d_{w}}$,
where $d_{v}$ and $d_{w}$ are the numbers of graphs in $\mathcal{C}_{0}$
that contain $v$ and $w$, respectively. By the definition of $X_{0}'$,
there are at least $\frac{1}{2}|X_{0}'|\cdot\frac{k}{\omega^{1/4}}$ edges which
are not covered by any graph in $\mathcal{C}_{0}$. Since
the vertices in $X_{0}$ are covered at most $t^{4}$ times, we have
\[
\BBE[e_{G}(A,B)]\ge \frac{1}{2}|X_{0}'|\cdot\frac{k}{\omega^{1/4}} \cdot 2^{-2t^{4}}
  \ge \frac{n}{t^3} \cdot \frac{k}{\omega^{1/4} 2^{2t^{4}+1}} \ge\frac{kn}{\omega^{1/2}},
\]
where we used the fact that $t=(\log\omega)^{1/5}$. Therefore, there
exists a choice of coloring of graphs in $\mathcal{C}_{0}$ such that
$e_{G}(A,B)\ge\frac{kn}{\omega^{1/2}}$, and this implies that $e_{G_{p}}(A,B)\ge\frac{1}{2}\omega^{1/2}n$
(recall that we conditioned on this fact).
By Lemma \ref{lem:highconnectedgraph} (iv), there exists a $t^{4}$-connected
subgraph of the bipartite subgraph of $G_{p}$ induced by $A\cup B$.
Furthermore, none of the edges of this $t^{4}$-connected subgraph
could have been covered by a graph in $\mathcal{C}_{0}$. Indeed, such a
graph should be colored by both red and blue, which is impossible. Therefore, we found
a $t^{4}$-connected subgraph of $G_{p}$ as claimed.
\end{proof}
Let $\mathcal{C}_{1}=\{C\in\mathcal{C}_{0}:|C\cap X_{0}|\ge\frac{k}{t^{5}}\}$.
Our next claim establishes a useful property regarding vertices
not in $X_0'$.

\begin{claim}
\label{claim:3dot5}
For every vertex $v \in X_0 \setminus X_0'$, there are at most $o(k)$ edges of $G[X_0]$ incident to $v$
not covered by any graph in $\mathcal{C}_1$.
\end{claim}
\begin{proof}
Note that there are two possible circumstances in which an edge in $G[X_0]$
is not covered by some graph in $\mathcal{C}_1$. First is if it is not
covered by any graph in $\mathcal{C}_0$, and second is if it is covered
by some graph in $\mathcal{C}_0 \setminus \mathcal{C}_1$. For a
fixed vertex $x \in X_0 \setminus X_0'$, by the definition of the set $X_0'$,
there are at most $\frac{k}{\omega^{1/4}} =o(k)$ edges incident to $x$ of the first type.
Also, since $x$ is contained in at most $t^4$ graphs in $\mathcal{C}_0$
and each graph $C \in \mathcal{C}_0 \setminus \mathcal{C}_1$ satisfies $|X_0 \cap V(C)| \le \frac{k}{t^5}$, there are at most $t^4 \cdot \frac{k}{t^5} = o(k)$
edges incident to $x$ of the second type. Thus we establish our claim.
\end{proof}

Let $X_{0}''$ be the set of vertices $v\in X_{0}$ which are
covered by at least two graphs in $\mathcal{C}_{1}$, or
are not covered by any graph in $\mathcal{C}_1$.
We defer the proof of the following claim, which is somewhat similar to that
of Claim \ref{claim:1}, to later.
\begin{claim}
\label{claim:3}$|X_{0}'' \setminus X_0'|<\frac{21n}{t^{3}}$.
\end{claim}

\medskip
\noindent \textbf{Step 2 : Intermediate partition}
\medskip

Let $X_{1}=X_{0}\setminus(X_{0}'\cup X_{0}'')$
and $Y_{1}=V\setminus X_{1}=Y_0\cup(X_{0}'\cup X_{0}'')$.
We first verify that
the partition $V = X_1 \cup Y_1$ and the collection
of graphs $\mathcal{C}_1$ satisfy the following list of properties
from the statement of Lemma \ref{lem:structure1}:

\begin{itemize}
  \setlength{\itemsep}{1pt} \setlength{\parskip}{0pt}
  \setlength{\parsep}{0pt}
\item[(a)] every graph $C \in \mathcal{C}_1$ is $(\log \omega)^{1/5}$-connected;
\item[(b)] the sets $X_1 \cap V(C)$ for $C \in \mathcal{C}_1$ form a partition of $X_1$, and
$|Y_1| < \frac{26n}{t^3}=o(n)$;
\item[(d)] for every $C \in \mathcal{C}_1$ and every vertex $v \in X_1 \cap V(C)$, there are at most $o(k)$ edges of $G$ incident to $v$ whose other endpoint lies in $X_1 \setminus V(C)$.
\end{itemize}
Property (a) follows from the definition of $\mathcal{C}_0$. Property (b) follows
from the definition of $X_0''$ and Claims
\ref{claim:1}, \ref{claim:2} and \ref{claim:3} which imply that $|Y_{1}|<\frac{26n}{t^{3}}$.
Property (d) follows from Claim \ref{claim:3dot5}, Property (b), and the fact that
$X_1 \subset X_0 \setminus X_0'$.

In order to find a partition of the vertex set and a collection of graphs satisfying
Property (c) as well, we will identify the graphs $C \in \mathcal{C}_1$ that do not
satisfy Property (c), and will move the vertices of $X_1 \cap V(C)$ to $Y_1$. Note
that this adjustment does not affect Properties (a) and (d). Our goal is to
maintain Property (b) as well by keeping the total number of vertices that
we move small enough.

Let $X_{1}'$ be the subset of vertices
of $X_{1}$ which have at least $\frac{3k}{4}$ neighbors in the set
$Y_{1}$. If $|X_{1}'|\ge\frac{130n}{t^{3}}$, then the bipartite subgraph
induced by $X_{1}'\cup Y_{1}$ has at most $\frac{6}{5}|X_{1}'|$
vertices and at least $\frac{3k}{4}|X_{1}'|$ edges. Thus the
average degree of this graph is at least $2\cdot\frac{3k}{4}\cdot\frac{5}{6}=\frac{5k}{4}$,
which contradicts our assumption saying that $G$ does not contain
such a subgraph. Therefore we have 
\begin{equation}
|X_{1}'|<\frac{130n}{t^{3}}. \label{eq:x1_prime}
\end{equation}

\begin{claim} \label{claim:coverededges}
For a vertex $x \in X_1 \setminus X_1'$ contained in $C_x \in \mathcal{C}_1$,
$x$ has degree at least $\frac{k}{8}$ in the subgraph of $G$ induced by the vertex set $X_1 \cap V(C_x)$.
\end{claim}
\begin{proof}
For a vertex $x\in X_{1}\setminus X_{1}'$, let $C_x \in \mathcal{C}_1$ be the
graph containing $x$. Since $x \notin X_1'$, there are at least $\frac{k}{4}$
edges of $G$ incident to $x$ in $G[X_{1}]$. By Property (d), at most
$o(k)$ edges among them are incident to a vertex not in $C_x$.
Therefore, $x$ has degree at least $\frac{k}{4} - o(k) \ge \frac{k}{8}$ in the
subgraph of $G$ induced by $X_1 \cap V(C_x)$.
\end{proof}

Let $\mathcal{C}_{1}'=\{C\in\mathcal{C}_{1}:|V(C)\cap X_{1}'|\ge\frac{|V(C)|}{t}\}$
and $\mathcal{C}_{1}''=\{C\in\mathcal{C}_{1}:|V(C)\cap Y_{1}|\ge\frac{|V(C)|}{t}\}$.

\begin{claim}\label{claim:5}$\sum_{C\in\mathcal{C}_{1}'}|V(C)|=o(n)$.\end{claim}
\begin{proof}
By the definition of $\mathcal{C}_1'$, we have $\sum_{C\in\mathcal{C}_{1}'}\frac{|V(C)|}{t}\le\sum_{C\in\mathcal{C}_{1}'}|V(C)\cap X_{1}'|\le|X_{1}'|$. By \eqref{eq:x1_prime}, 
this implies $\sum_{C\in\mathcal{C}_{1}'}\frac{|V(C)|}{t}\le \frac{130n}{t^3}$,
from which it follows that $\sum_{C\in\mathcal{C}_{1}'}|V(C)|\le\frac{130n}{t^{2}}$.
\end{proof}

\begin{claim}\label{claim:4}$\sum_{C\in\mathcal{C}_{1}''}|V(C)|=o(n)$.\end{claim}

The proof of Claim \ref{claim:4} will be given later.

\medskip
\noindent \textbf{Step 3 : Final partition and the collection of $t$-connected subgraphs}
\medskip

Let $\mathcal{C}_{2}=\mathcal{C}_{1}\setminus(\mathcal{C}_{1}'\cup\mathcal{C}_{1}'')$.
Let $X_{2}$ be the subset of vertices of $X_{1}$ which are covered
by some graph in $\mathcal{C}_{2}$, and let $Y_{2}=V\setminus X_{2}$.
We claim that the partition $V=X_{2}\cup Y_{2}$ and the collection
$\mathcal{C}_{2}$ satisfy the claims of the lemma. We recall the
properties that we wish to establish.

\begin{itemize}
  \setlength{\itemsep}{1pt} \setlength{\parskip}{0pt}
  \setlength{\parsep}{0pt}
\item[(a)] every graph $C \in \mathcal{C}_2$ is $(\log \omega)^{1/5}$-connected;
\item[(b)] the sets $X_2 \cap V(C)$ for $C \in \mathcal{C}_2$ form a partition of $X_2$, and $|Y_2|=o(n)$;
\item[(c)] for every $C \in \mathcal{C}_2$, $|Y_2 \cap V(C)| = o(|V(C)|)$ and the induced subgraph $G[X_2\cap V(C)]$ contains at least $(1-o(1))|V(C)|$ vertices of degree at least $k/8$;
\item[(d)] for every vertex $v \in X_2 \cap V(C)$, there are at most $o(k)$ edges of $G$ incident to $v$ whose other endpoint lies in $X_2 \setminus V(C)$.
\end{itemize}

As mentioned above, Properties (a) and (d) follow from the same properties
for $X_1, Y_1$, and $\mathcal{C}_1$.
Since $|X_{2}|\ge|X_{1}|-\sum_{C\in\mathcal{C}_{1}'\cup\mathcal{C}_{1}''}|V(C)|=(1-o(1))n$,
and $|Y_{2}|=o(n)$, Property (b) follows as well.
Note that $X_1 \supseteq X_2$, and that the vertices in $X_1 \setminus X_2$
are covered exactly once by some graph in $\mathcal{C}_1$.
Therefore, for all $C \in \mathcal{C}_2$, we have
$V(C) \cap X_1 = V(C) \cap X_2$ and $V(C) \cap Y_1 = V(C) \cap Y_2$.
Thus for $C\in\mathcal{C}_{2}$, since $C \notin \mathcal{C}_1''$,
we have $|V(C)\cap Y_{2}|=|V(C)\cap Y_{1}|<\frac{|V(C)|}{t}$,
and the first part of Property (c) holds. Also,
by Claim \ref{claim:coverededges}, for $C\in\mathcal{C}_{2}$
the vertices in $V(C)\cap(X_{2}\setminus X_{1}')=V(C)\cap(X_{1}\setminus X_{1}')$
have degree at least $\frac{k}{8}$ in the subgraph of $G$
induced by $V(C) \cap X_1 = V(C) \cap X_2$.
By the fact $C \notin \mathcal{C}_1'$, we have
\[
|V(C)\cap(X_{1}\setminus X_{1}')|=(1-o(1))|V(C)\cap X_{1}|=(1-o(1))|V(C)|,
\]
and this establishes the second part of Property (c).
\end{proof}

It remains to prove Claims \ref{claim:3} and \ref{claim:4}.

\begin{proof}[Proof of Claim \ref{claim:3}]
Recall that $\mathcal{C}_{1}=\{C\in\mathcal{C}_{0}:|C\cap X_{0}|\ge\frac{k}{t^{5}}\}$
and $X_{0}''$ is the set of vertices $v\in X_{0}$ which are covered
by at least two graphs in $\mathcal{C}_{1}$, or are not covered
by any graph in $\mathcal{C}_1$. Let $X_{0, \ge 2}''$ be the
vertices which are covered by at least two graphs in $\mathcal{C}_1$
and $X_0'''$ be the vertices not covered by any graph
in $\mathcal{C}_1$.

We first estimate the size of the set $X_{0, \ge 2}''$.
Since the graphs in
$\mathcal{C}_{1}$ intersect $X_{0}$ in at least $\frac{k}{t^{5}}$
vertices and each vertex in $X_{0}$ is covered at most $t^{4}$ times,
we have $\frac{k}{t^{5}}|\mathcal{C}_{1}|\le t^{4}|X_{0}|$, from
which it follows that 
\begin{equation}
|\mathcal{C}_{1}|\le\frac{t^{9}n}{k}.  \label{eq:claim3}
\end{equation}
Consider the following auxiliary graph $\Gamma_{1}$ over the vertex set $\mathcal{C}_{1}$,
where two vertices $C, C' \in \mathcal{C}_1$ are connected by an edge
if they share a vertex from $X_{0,\ge  2}''$
(we place only one edge for each vertex even
it is contained in more than two graphs in $\mathcal{C}_1$). The number of vertices of $\Gamma_{1}$
is at most $\frac{t^{9}n}{k}$. Since every two graphs in $\mathcal{C}_1$
intersect in less than $t$ vertices, each edge of $\Gamma_1$ can
account for less than $t$ vertices of $X_{0, \ge 2}''$, and thus
the number of edges of $\Gamma_1$ is at least $\frac{|X_{0, \ge 2}''|}{t}$.

Suppose that $\Gamma_{1}$ contains a $t$-connected subgraph over
the vertices $C_{1},C_{2},\cdots,C_{s}$ of $\Gamma_{1}$. We claim
that $C_{\sigma}=C_{1}\cup\cdots\cup C_{s}$ is a $t$-connected subgraph
and this will contradict the minimality of the family $\mathcal{C}_{0}$.
It suffices to prove that $C_{\sigma}$ is connected even after removing
a set $S$ of at most $t-1$ vertices. Let $v$ and $w$ be two vertices
in $V(C_{\sigma})\setminus S$. Each vertex in $S$ corresponds to
at most one edge in the auxiliary graph $\Gamma_{1}$, and thus even
after removing the edges corresponding to vertices in $S$, without
loss of generality there exists a path $(C_{1},C_{2},\cdots,C_{h})$
of $\Gamma_{1}$ for which $v\in C_{1}$ and $w\in C_{h}$. By the
definition of the graph $\Gamma_{1}$, for each $i$, there exists
a vertex $v_{i}\in C_{i}\cap C_{i+1}$ such that $v_{i}\notin S$.
Let $v_{0}=v$ and $v_{h}=w$. Then for all $0\le i<h$, we can find
a path from $v_{i}$ to $v_{i+1}$ in the graph $C_{i}\setminus S$
(recall that $C_{i}$ is $t$-connected). This implies that there
exists a path from $v$ to $w$ in $C_{\sigma}\setminus S$.

Thus $\Gamma_{1}$ cannot contain a $t$-connected subgraph. By Lemma
\ref{lem:highconnectedgraph} (iv), we then have
\[
\frac{|X_{0, \ge 2}''|}{t}\le2t\cdot\frac{t^{9}n}{k},
\]
which implies $|X_{0, \ge 2}''|\le\frac{2t^{11}n}{k}<\frac{n}{t^3}$ (note
that $t=(\log\omega)^{1/5}\le(\log k)^{1/5})$.

Now consider the set $X_0'''$. By Claim \ref{claim:3dot5} and the definition
of $X_0'''$, each vertex in $Z = X_0''' \setminus X_0'$ has at least
$k-o(k)$ neighbors in the set $Y_0$. Therefore, if $|Z| \ge 5|Y_0|$,
then we obtain a bipartite subgraph of $G$ with at least $|Z| \cdot (k-o(k))$
edges and at most $\frac{6}{5}|Z|$ vertices. Thus this bipartite graph
has average degree at least $2 \cdot (k-o(k)) \frac{5}{6} \ge \frac{5}{4}k$.
However, this contradicts our assumption, and thus we have $|Z| < 5|Y_0| \le \frac{20n}{t^3}$.
Therefore, $|X_0'' \setminus X_0'| \le |X_{0, \ge 2}''| + |X_0'''\setminus X_0'| \le \frac{21n}{t^3}$.
\end{proof}

\begin{proof}[Proof of Claim \ref{claim:4}]
Recall that $\mathcal{C}_{1}''=\{C\in\mathcal{C}_{1}:|V(C)\cap Y_{1}|\ge\frac{|V(C)|}{t}\}$.
Consider an auxiliary bipartite graph $\Gamma_{2}$ whose vertex set
consists of two parts, where one part is the set $Y_{1}$ and the
other part is $\mathcal{C}_{1}''$. A pair $(v,C)$ forms an edge in
$\Gamma_{2}$ if $v\in V(C)$. As we have seen in the proof of Claim
\ref{claim:1}, this graph cannot contain a $t$-connected subgraph
(in fact, $\Gamma_{2}$ is a subgraph of $\Gamma_{0}$ defined in
the proof of Claim \ref{claim:1}). By Property (b) in Step 2 
which follows from Claims \ref{claim:1}, \ref{claim:2} and \ref{claim:3}),
we have $|Y_{1}|\le\frac{26n}{t^{3}}$, and by \eqref{eq:claim3} we have
$|\mathcal{C}_{1}''|\le |\mathcal{C}_{1}|\le\frac{t^{9}n}{k}$.
Hence the number of vertices of $\Gamma_{2}$ is $|Y_{1}|+|\mathcal{C}_{1}''|\le\Big(\frac{26}{t^{3}}+\frac{t^{9}}{k}\Big)n$.
The number of edges is at least $\sum_{C\in\mathcal{C}_{1}''}|V(C)\cap Y_{1}|\ge\sum_{C\in\mathcal{C}_{1}''}\frac{|V(C)|}{t}$.
Therefore by Lemma \ref{lem:highconnectedgraph} (iv), we have
\[
\sum_{C \in \mathcal{C}_1''} \frac{|V(C)|}{t}\le2t\cdot\Big(\frac{26}{t^{3}}+\frac{t^{9}}{k}\Big)n,
\]
which implies that $\sum_{C\in\mathcal{C}_{1}''}|V(C)|<\frac{53n}{t}$ (recall that $t = (\log w)^{1/5}
\le (\log k)^{1/5}$).
\end{proof}

One more round of sprinkling will give us our desired structural lemma,
Lemma \ref{lem:structure}, which says the following.
Let $0<\varepsilon\le\frac{1}{2}$ be fixed, $G$ be a graph on $n$ vertices
with minimum degree at least $k$ that does not contain
a bipartite subgraph of average degree at least $\frac{5}{4}k$ and let $p=\frac{\omega}{k}$
for some function $\omega=\omega(k) \ll k$
that tends to infinity as $k$ does. Then
$G_{p}$ a.a.s. admits a partition $V=X\cup Y$ of its vertex set,
and contains a collection $\mathcal{C}$ of subgraphs of $G_{p}$
satisfying:
\begin{itemize}
  \setlength{\itemsep}{1pt} \setlength{\parskip}{0pt}
  \setlength{\parsep}{0pt}
\item[(a)] for every $C \in \mathcal{C}$, $C$ is $(\log \omega)^{1/5}$-connected;
\item[(b)] the sets $X \cap V(C)$ for $C \in \mathcal{C}$ form a partition of $X$, and $|Y|=o(n)$;
\item[(c)] for every $C \in \mathcal{C}$, one of the following holds:
\begin{itemize}
\item[(i)] the graph $G[V(C)]$ contains at least $(1-\varepsilon)|V(C)|$ vertices of degree at least $(1-\varepsilon)k$, or
\item[(ii)] $|Y \cap V(C)| = o(|V(C)|)$, the graph $G[X \cap V(C)]$ contains at least $(1-\varepsilon)|V(C)|$ vertices
of degree at least $\frac{k}{8}$, and there exists a bipartite graph $\Gamma_C \subseteq G$ with parts $X \cap V(C)$ and
$Y \setminus V(C)$ which contains at least $\frac{|V(C)|\varepsilon^2 k}{4}$ edges and
has maximum degree at most $\frac{8}{\varepsilon}k$.
\end{itemize}
\end{itemize}
\begin{proof}[Proof of Lemma \ref{lem:structure}]
Set $p_{1}=p_{2}=\frac{\omega}{2k}$ and $t=(\log\omega)^{1/5}$.
Apply Lemma \ref{lem:structure1} to $G_{p_{1}}$ to find a partition
$V=X\cup Y$ and a collection $\mathcal{C}$ of subgraphs of $G_{p_1}$
satisfying the following properties:
\begin{itemize}
  \setlength{\itemsep}{1pt} \setlength{\parskip}{0pt}
  \setlength{\parsep}{0pt}
\item[(a')] every graph $C \in \mathcal{C}$ is $(\log \omega)^{1/5}$-connected;
\item[(b')] the sets $X \cap V(C)$ for $C \in \mathcal{C}$ form a partition of $X$, and $|Y|=o(n)$;
\item[(c')] for every $C \in \mathcal{C}$, $|Y \cap V(C)| = o(|V(C)|)$ and the induced subgraph $G[X\cap V(C)]$ contains at least $(1-o(1))|V(C)|$ vertices of degree at least $k/8$;
\item[(d')] for every $C \in \mathcal{C}$ and every vertex $v \in X \cap V(C)$, there are at most $o(k)$ edges of $G$ incident to $v$ whose other endpoint lies in $X \setminus V(C)$.
\end{itemize}
(we denote the properties by (a'), (b'), (c'), and (d') in order to distinguish
it from the properties (a), (b), and (c)).

For $C \in \mathcal{C}$, let $X_C = X \cap V(C)$, $Z_C = Y \setminus V(C)$.
Define a bipartite subgraph $\Gamma_C$ of $G$ with bipartition
$X_C \cup Z_C$ as follows:
first take all the edges of $G$ between $X_C$ and $Z_C$, and for
each vertex of $X_C$ of degree at least $k$, retain $k$ arbitrarily
chosen edges incident with it.
Let $Z_{C}' \subset Z_C$ be the vertices which have degree
greater than $\frac{8k}{\varepsilon}$ in this bipartite subgraph, and let $Z_{C}'' \subset Z_C$ be
the vertices which have degree at most $\frac{8k}{\varepsilon}$.

Now expose the edges of $G_{p_2}$.
If a vertex $z\in Z_{C}'$ has at least $t$ neighbors in $G_{p_{2}}$
in the set $X_{C}$, then we can add $z$ to the graph $C$ to obtain
another $t$-connected subgraph (see Lemma \ref{lem:highconnectedgraph}
(ii)). In such a situation, we say that $z$ is \emph{absorbed} to $C$,
and let the \emph{enlarged} graph $\hat{C}$ be the union of $C$ with the
set of vertices which is absorbed by $C$.
Note that even though the same holds for vertices in $Z_C''$, for technical
reasons, we only absorb vertices from $Z_C'$ to $C$.
Further note that we allow a fixed vertex being absorbed to several graphs,
and that this does not affect the property that $X \cap V(C)$ forms a
partition of $X$, since each vertex being absorbed is a vertex in $Y$.

Let $\mathcal{C}'$ be the collection of graphs $C \in \mathcal{C}$ for which
the number of edges of $\Gamma_C$ incident to $Z_C'$ which are not covered
by the enlarged graph $\hat{C}$ is at least $\frac{\varepsilon^{2}k}{8}|V_{C}|$.

\begin{claim} \label{claim:last}
We a.a.s. have $\sum_{C\in\mathcal{C}'} |X \cap V(C)|=o(n)$.
\end{claim}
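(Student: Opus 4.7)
The plan is to bound the expected total number of ``uncovered'' edges and then appeal to Markov's inequality. For each $C \in \mathcal{C}$, I will let $N_C$ denote the number of edges of $\Gamma_C$ from $X_C = X \cap V(C)$ to $Z_C'$ whose $Z_C'$-endpoint fails to be absorbed into $\hat{C}$. The defining condition of $\mathcal{C}'$ is exactly $N_C \geq \frac{\varepsilon^2 k}{8}|V(C)|$, so it suffices to show that $\sum_{C \in \mathcal{C}} N_C$ is a.a.s.\ of order $o(\varepsilon^2 k n)$.

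The key estimate will be that every individual vertex $z \in Z_C'$ is absorbed into $C$ with overwhelming probability. By the definition of $Z_C'$, the vertex $z$ has more than $\frac{8k}{\varepsilon}$ neighbors in $X_C$ in $G$, so the expected number of such neighbors joined to $z$ by an edge of $G_{p_2}$ is at least $\frac{8k}{\varepsilon}\cdot\frac{\omega}{2k}=\frac{4\omega}{\varepsilon}$, which dwarfs the absorption threshold $t=(\log\omega)^{1/5}$. Chernoff's inequality will then yield $\mathbb{P}[z \text{ is not absorbed into } C] \leq e^{-\Omega(\omega)}$.

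To combine these estimates I will use linearity of expectation. Since each $x \in X_C$ has degree at most $k$ in $\Gamma_C$ by construction, and $\{X_C\}_{C\in\mathcal{C}}$ partitions $X$, we have $\sum_{C \in \mathcal{C}} |E(\Gamma_C)| \leq k|X| \leq kn$, and therefore
\[
\mathbb{E}\Big[\sum_{C \in \mathcal{C}} N_C\Big] \leq kn \cdot e^{-\Omega(\omega)}.
\]
Markov's inequality will then give $\sum_{C \in \mathcal{C}} N_C \leq kn/\sqrt{\omega}$ a.a.s., and on this event the defining inequality of $\mathcal{C}'$ yields
\[
\sum_{C \in \mathcal{C}'} |X \cap V(C)| \;\leq\; \sum_{C \in \mathcal{C}'} |V(C)| \;\leq\; \frac{8}{\varepsilon^2 k} \sum_{C \in \mathcal{C}'} N_C \;\leq\; \frac{8n}{\varepsilon^2 \sqrt{\omega}}\;=\;o(n),
\]
as required. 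The only genuinely delicate step is the Chernoff bound on absorption, which succeeds precisely because the parameters were tuned so that $d_z p_2 \geq 4\omega/\varepsilon$ for every $z \in Z_C'$ comfortably exceeds $t=(\log\omega)^{1/5}$; the rest of the derivation is a routine linearity-and-Markov calculation.
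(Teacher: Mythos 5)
Your proof is correct, but it takes a genuinely different (and arguably slicker) route than the paper's. Both arguments start from the same key estimate: for $z \in Z_C'$, the degree lower bound $d_z > \frac{8k}{\varepsilon}$ makes $d_z p_2 \geq \frac{4\omega}{\varepsilon} \gg t$, so by Chernoff the absorption failure probability is $e^{-\Omega(\omega)}$. From there the paths diverge. The paper first proves a strong per-$C$ concentration statement: it applies Hoeffding's inequality to the scaled indicator variables $\frac{d_i}{|V(C)|}\cdot\mathbf{1}_i$ (using independence of absorption events across distinct $z \in Z_C'$) to show $\mathbb{P}(C\in\mathcal{C}') \leq e^{-\Omega(k)}$, and only then applies Markov to $\sum_{C\in\mathcal{C}'}|X\cap V(C)|$. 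You skip the concentration step entirely: you aggregate the uncovered-edge counts $N_C$ into a single random variable $\sum_C N_C$, bound its expectation by $kn\,e^{-\Omega(\omega)}$ using linearity (and the structural fact that the $X_C$ partition $X$ and $\deg_{\Gamma_C}(x)\leq k$ for $x\in X_C$, giving $\sum_C |E(\Gamma_C)|\leq kn$), apply Markov once to get $\sum_C N_C \leq kn/\sqrt{\omega}$ a.a.s., and then translate this back into the claim via the defining inequality $N_C \geq \frac{\varepsilon^2 k}{8}|V(C)|$ for $C\in\mathcal{C}'$. Your route is more elementary in that it needs no concentration inequality beyond Chernoff and does not even use the independence of the absorption indicators; the paper's Hoeffding step buys a per-graph exponential bound, but as your proof shows, the first moment of the aggregated count already suffices for the claim as stated.
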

\begin{proof}
Suppose that the vertices in $Z_{C}'$ have degree $d_{1},\cdots,d_{s}$
in $\Gamma_C$. Since we only consider at most $k$ edges incident
to each vertex of $X_{C}$, we have $\sum_{i}d_{i}\le k|V(C)|$ (also
note that $d_i \le |V(C)|$ for all $i$).
For a vertex $z\in Z_{C}'$, since $z$ has degree $d_{z}\ge \frac{8k}{\varepsilon}$
in $\Gamma_C$, by Chernoff's inequality,
the probability that $z$ cannot be absorbed is
at most $e^{-\Omega(\omega)}$. Let $N$ be the random variable
which counts the number of edges of $\Gamma_C$ incident to non-absorbed vertices
from $Z_C'$ after exposing $G_{p_2}$. We have,
\[
\BBE[N] = \sum_{i}d_{i}\cdot e^{-\Omega(\omega)}\le\Big(\sum_{i}d_{i}\Big)\cdot e^{-\Omega(\omega)}=o(|V(C)|\cdot k).
\]
Let ${\bf 1}_{i}$ be the indicator random variable
of the event that the $i$-th vertex of $Z_{C}'$ is
absorbed to $C$. Note that the events ${\bf 1}_i$ are independent since they
depend on disjoint sets of edges, and that we have $N = \sum_{i=1}^{s} d_i \cdot {\bf 1}_{i}$.
Since $0 \le \frac{d_i}{|V(C)|} \le 1$, by applying Hoeffding's
inequality to the random variables $\frac{d_{i}}{|V(C)|}\cdot {\bf 1}_{i}$,
we see that the probability of $\frac{N}{|V(C)|} \ge \frac{\varepsilon^2k}{8}$,
which is equivalent to $C \in \mathcal{C}'$, is
at most $e^{-\Omega(k)}$. Then,
\[
\BBE\Big[\sum_{C\in\mathcal{C}'}|X\cap V(C)|\Big]\le\sum_{C\in\mathcal{C}}|X\cap V(C)|\cdot e^{-\Omega(k)}=o(n).
\]
Thus by Markov's inequality, it follows that
$\sum_{C \in \mathcal{C}'} |X \cap V(C)| = o(n)$ a.a.s.
\end{proof}

Condition on the conclusion of Claim \ref{claim:last}.
Let $\mathcal{C}_1 = \{ \hat{C} : C \in \mathcal{C} \setminus \mathcal{C}' \}$
(recall that $\hat{C}$ is the enlarged graph obtained from $C$).
Let $X_1$ be the subset of
vertices of $X$ covered by some graph in $\mathcal{C}_1$, and let
$Y_1 = V \setminus X_1$. We claim that the partition
$V = X_1 \cup Y_1$ and the collection of graphs $\mathcal{C}_1$ satisfy
properties (a), (b), (c) of Lemma \ref{lem:structure} (which we listed before this proof).

Property (a) immediately follows from how we constructed the enlarged graphs.
Note that the difference between the sets $X$ and $X_1$ consist of the
vertices of $X \cap V(C)$ for $C \in \mathcal{C}'$, and
that $\sum_{C \in \mathcal{C}'} |X \cap V(C)| = o(n)$. Since the
difference between a graph $C \in \mathcal{C}$ and its
enlarged graph $\hat{C}$ lie in $Y \subset Y_1$, Property (b)
follows from Property (b'). We now focus on proving that (c) holds as well.

Take a graph $C \in \mathcal{C} \setminus \mathcal{C}'$.
If $e_{\Gamma_C}(X_C, Z_C'') \ge \frac{\varepsilon^2 k}{4}|V(C)|$, then
(ii) holds and there is nothing to prove (recall that the vertices
in $Z_C''$ are not added to the enlarged graph).
Suppose that $e_{\Gamma_C}(X_C, Z_C'') < \frac{\varepsilon^2 k}{4}|V(C)|$.
Since $C\notin\mathcal{C}'$, there are less than
$\frac{\varepsilon^{2}k}{8}|V(C)|$ edges of $\Gamma_C$ incident to
$Z_C'$ that are not covered by $\hat{C}$. Therefore, the total number of edges in $\Gamma_C$ not covered by $\hat{C}$ is at most
$e_{\Gamma_C}(X_C, Z_C'')+\frac{\varepsilon^{2}k}{8}|V(C)| \le \frac{3\varepsilon^{2}k}{8}|V(C)|$.

We can count the number of such edges in another way.
Let $X_C'$ be the subset of vertices of $X_C$, whose degree in $G[V(\hat{C})]$
is less than $(1-\varepsilon)k$. Since $X_1 \subset X$,
by Property (d'), a vertex in $X_C'$ can
have at most $o(k)$ neighbors in $X_1 \setminus V(C)$. Therefore, the number of
edges of $\Gamma_C$ not covered by $\hat{C}$ is at least
$|X_C'| \cdot \frac{\varepsilon}{2}k$. By combining this with the
bound established above, we have
\[ |X_C'| \cdot \frac{\varepsilon}{2}k \le \frac{3\varepsilon^{2}k}{8}|V(C)|, \]
from which it follows that $|X_C'| \le\frac{3\varepsilon}{4} |V(C)|$.
Recall that by Property (c'), we have $|X \cap V(C)| = (1-o(1))|V(C)|$ for all
$C \in \mathcal{C}$.
Thus $G[V(\hat{C})]$ contains at least $|X \cap V(C)| - \frac{3\varepsilon}{4} |V(C)| \ge \left(1 - \frac{7\varepsilon}{8}\right)|V(C)|$ vertices
of degree at least $(1-\varepsilon)k$. It then suffices to prove that
$|V(C)| \ge \left(1- \frac{\varepsilon}{8}\right)|V(\hat{C})|$. Since
we only added the vertices of $Z'_C$ to $C$ in order to obtain $\hat{C}$,
we have
\[
|V(\hat{C}) \setminus V(C)|
 \le |Z_C'| \le \frac{e(\Gamma_C)}{(8/\varepsilon)k}
 \le \frac{k|V(C)|}{(8/\varepsilon) k} = \frac{\varepsilon}{8}|V(C)|, \]
and it implies $|V(\hat{C})| \le \left(1 + \frac{\varepsilon}{8}\right)|V(C)|
\le \frac{1}{1-(\varepsilon/8)}|V(C)|$.
\end{proof}

\section{Concluding remarks}

In this paper, we studied random subgraphs of graphs with large
minimum degree. Our goal was to extend classical results on random
graphs to a more general setting, where we replace the host graph by
a graph with large minimum degree. We proved that the results
asserting the a.a.s. existence of long paths and cycles in $G(n,p)$
can in fact be extended to this setting.
The problems we addressed in this paper are also closely related to
our previous paper \cite{KrLeSu}, where we studied random subgraphs
of graphs on $n$ vertices with minimum degree at least
$\frac{n}{2}$, and proved that for every graph $G$ of minimum degree
at least $\frac{n}{2}$ and $p\gg\frac{\log n}{n}$, the random graph
$G_{p}$ a.a.s. is Hamiltonian.

Similarly to Theorem \ref{thm:hamiltonpath}, it is natural to expect
that for every graph $G$ of minimum degree at least $k$ and
$p\ge\frac{(1+\varepsilon)\log k}{k}$, the graph $G_{p}$ a.a.s.
contains a cycle of length at least $k+1$. While we are unable to
settle this question at present, it seems that the techniques we
developed in this paper can be useful in attacking this problem.

It is also known that a directed graph of minimum outdegree at least
$k$ contains a cycle of length at least $k+1$. However, it is no
longer true that there exists a function $p_0 = p_0(k) < 1$ for
which the following holds: if $p \ge p_0$, then for every directed
graph $D$ of minimum outdegree at least $k$, $D_p$ a.a.s. contains a
cycle of length $k$. Indeed, suppose that we are given a function
$p_0$ depending only on $k$. Let $N$ be a large enough integer
depending on $p_0$, and consider a blow-up of a directed cycle of
length $N$, where each vertex is replaced by an independent set of
size $k$, and each edge is replaced by a complete bipartite graph,
whose orientation of edges comes from that of the underlying edge in
the directed cycle (call this directed graph $D$). A necessary
condition for $D_{p_0}$ to contain a cycle is that each complete
bipartite graph contains at least one edge. The probability of this
happening is exactly $(1 - (1-p_0)^{k^2})^{N}$. However, this can be
made arbitrarily small by choosing $N$ to be large enough depending
on $p_0$. Note that if the above event does not hold, then not only
does $D_{p_0}$ not contain a cycle of length $k$, but it also does
not contain a cycle of any length. This gives a partial explanation
to why the proof of Theorem \ref{thm:linearcycle} is unexpectedly
challenging technically, as many ``natural" approaches at one point
reduce the problem to a problem of finding a cycle in some directed
graph after taking a random subgraph of it.

\medskip

\noindent \textbf{Acknowledgement}. We would like to thank the
two anonymous referees for their valuable comments.


\begin{thebibliography}{100}
\bibitem{AjKoSz}M. Ajtai, J. Koml\'os, and E. Szemer\'edi, The
longest path in a random graph, \emph{Combinatorica} 1 (1981), 1--12.

\bibitem{BeKrSu}I. Ben-Eliezer, M. Krivelevich, and B. Sudakov, Long
cycles in subgraphs of (pseudo)random directed graphs, \emph{J.
Graph Theory} 70 (2012), 284--296.

\bibitem{Bollobas}B. Bollob\'as, The evolution of sparse graphs,
\emph{Graph Theory and Combinatorics (Proc. Cambridge
Combinatorics Conference in Honors of Paul Erd\H{o}s}, Academic Press (1984), 35--57.

\bibitem{Bollobas2}B. Bollob\'as, \textbf{Random graphs}, Cambridge
Stud. Adv. Math. 73, Cambridge University Press, Cambridge (2001).

\bibitem{BoFeFr}B. Bollob\'as, T. Fenner, and A. Frieze,
Long cycles in sparse random graphs, in \emph{Graph theory and combinatorics} (Cambridge, 1983), Academic Press, London (1984), 59--64.

\bibitem{CuKa}B. Cuckler and J. Kahn, Hamiltonian cycles in Dirac graphs, \emph{Combinatorica} 29 (2009), 299--326.

\bibitem{Diestel}R. Diestel, \textbf{Graph theory}, Volume 173 of
\emph{Graduate Texts in Mathematics}, Springer-Verlag, Heidelberg, 4th
edition (2010).

\bibitem{Dirac}G. Dirac, Some theorems on abstract graphs, \emph{Proc.
London Math. Soc.}, 2 (1952), 69--81.


\bibitem{Vega} W. Fernandez de la Vega, Long paths in random graphs,
\emph{Studia Sci. Math. Hungar.} 14 (1979), 335--340.

\bibitem{FrPi}J. Friedman and N. Pippenger, Expanding graphs contain
all small trees, \emph{Combinatorica} 7 (1987), 71--76.

\bibitem{Frieze}A. Frieze, On large matchings and cycles in sparse
random graphs, \emph{Discrete Math.} 59 (1986), 243--256.

\bibitem{Gilbert}E. Gilbert, Random graphs, \emph{Annals of Mathematical Statistics} 30 (1959), 1141--1144.

\bibitem{KoSz}J. Koml\'os and E. Szemer\'edi, Limit distribution
for the existence of Hamilton cycles in random graphs, \emph{Discrete
Math.} 43 (1983), 55--63.

\bibitem{Korshunov}A. Korshunov, Solution of a problem of Erd\H{o}s
and R\'enyi on Hamilton cycles in non-oriented graphs, \emph{Soviet
Math. Dokl.}, 17 (1976), 760--764.

\bibitem{KrLeSu}M. Krivelevich, C. Lee, and B. Sudakov, Robust Hamiltonicity
of Dirac graphs, \emph{Trans. Amer. Math. Soc.}, in press.

\bibitem{KrSu}M. Krivelevich and B. Sudakov, The phase transition
in random graphs -- a simple proof, \emph{Random Struct. Algor.}, in press.

\bibitem{Locke}S. Locke, Relative lengths of paths and cycles in
$k$-connected graphs, \emph{J. Comb. Theory B} 32 (1982), 206--222.

\bibitem{Mader}W. Mader, Homomorphies\'atze f\"ur Graphen, \emph{Math.
Ann.} 178 (1968), 154--168.

\bibitem{Menger}K. Menger, Zur allgemeinen Kurventheorie, \emph{Fund.
Math.} 10 (1927), 96--115.

\bibitem{McDiarmid}C. McDiarmid, Concentration, in \emph{Probabilistic
methods for algorithmic discrete mathematics} (ed. M. Habib, C. McDiarmid,
J. Ramirez-Alfonsin, and B. Reed), Springer, Berlin (1998), 1--46.

\bibitem{Posa}L. P\'osa, Hamiltonian circuits in random graphs,
\emph{Discrete Math.} 14 (1976), 359--364.

\bibitem{SaSeSz}G. S\'ark\"ozy, S. Selkow, and E. Szemer\'edi, On the number of Hamiltonian cycles in Dirac graphs, \emph{Discrete Math.} 265 (2003), 237--250.

\bibitem{SuVu}B. Sudakov and V. Vu, Local resilience of graphs, \emph{Random Struct. Algor.} 33 (2008), 409--433.
\end{thebibliography}
\end{document}